\newcommand*\rel@kern[1]{\kern#1\dimexpr\macc@kerna}
\newcommand*\widebar[1]{%
  \begingroup
  \def\mathaccent##1##2{%
    \rel@kern{0.8}%
    \overline{\rel@kern{-0.8}\macc@nucleus\rel@kern{0.2}}%
    \rel@kern{-0.2}%
  }%
  \macc@depth\@ne%
  \let\math@bgroup\@empty\let\math@egroup\macc@set@skewchar%
  \mathsurround\z@ \frozen@everymath{\mathgroup\macc@group\relax}%
  \macc@set@skewchar\relax
  \let\mathaccentV\macc@nested@a%
  \macc@nested@a\relax111{#1}%
  \endgroup
}
\DeclareMathOperator*{\vecop}{vec}
\DeclareMathOperator*{\trace}{trace}
\newcommand{\vvec}{\mathsf{v}}
\newcommand{\tran}{\ensuremath{\mkern-1.5mu\mathsf{T}}}
\newcommand\xrowht[2][0]{\addstackgap[.5\dimexpr#2\relax]{\vphantom{#1}}}
\pgfplotsset{compat=1.9}
\DeclareMathOperator{\diag}{diag}
\renewcommand{\leq}{\leqslant}
\renewcommand{\geq}{\geqslant}
\pgfplotsset{compat=1.3}		
\newcommand\reallywidehat[1]{%
  \savestack{\tmpbox}{\stretchto{%
      \scaleto{%
        \scalerel*[\widthof{\ensuremath{#1}}]{\kern-.6pt\bigwedge\kern-.6pt}%
        {\rule[-\textheight/2]{1ex}{\textheight}}
      }{\textheight}%
    }{0.5ex}}%
  \stackon[1pt]{#1}{\tmpbox}%
}
\newcommand{\RR}{{\mathbb{R}}}
\newcommand{\CC}{{\mathbb{C}}}
\newcommand{\LL}{{\mathbf{L}}}
\newcommand{\cL}{{\mathcal{L}}}
\newcommand{\lyap}{{\ttfamily lyap}}
\DeclareOldFontCommand{\rm}{\normalfont\rmfamily}{\mathrm}
\DeclareOldFontCommand{\sf}{\normalfont\sffamily}{\mathsf}
\DeclareOldFontCommand{\tt}{\normalfont\ttfamily}{\mathtt}
\DeclareOldFontCommand{\bf}{\normalfont\bfseries}{\mathbf}
\DeclareOldFontCommand{\it}{\normalfont\itshape}{\mathit}     
\DeclareOldFontCommand{\sl}{\normalfont\slshape}{\@nomath\sl} 
\DeclareOldFontCommand{\sc}{\normalfont\scshape}{\@nomath\sc} 
\newcommand{\REV}[1]{\textcolor{black}{#1}}
\newtheorem{Prop}{Proposition}[section]
\newtheorem{Ass}{Assumption}
\theoremstyle{definition} 
\newtheorem{example}{Example}[section]
\begin{document}
\title{Efficient solution of sequences of parametrized Lyapunov equations with applications}

\author[$\ast$]{Davide~Palitta}
\affil[$\ast$]{%
  Universit\`a di Bologna,
  Centro $AM^2$,
  Dipartimento di Matematica,
  Piazza di Porta S. Donato 5,
  40127 Bologna,
  Italy.
  \authorcr%
  \email{davide.palitta@unibo.it},
  \orcid{0000-0002-6987-4430}
}

\author[$\dagger$]{Zoran~Tomljanovi\'{c}}
\affil[$\dagger$]{%
  School of Applied Mathematics and Informatics,
  J. J. Strossmayer University of Osijek,
  Trg Ljudevita Gaja 6,
  31000 Osijek,
  Croatia.
  \authorcr
  \email{ztomljan@mathos.hr},
  \orcid{0000-0002-3239-760X}
}

\author[$\ddag$]{Ivica~Naki\'{c}}
\affil[$\ddag$]{%
  Department of Mathematics,
  Faculty of Science,
  University of Zagreb,
  Bijeni\v{c}ka 30,
  10000 Zagreb,
  Croatia.
  \authorcr
  \email{nakic@math.hr},
  \orcid{0000-0001-6549-7220}
}

\author[$\S$]{Jens~Saak}
\affil[$\S$]{%
  Computational Methods in Systems and Control Theory (CSC),
  Max Planck Institute for Dynamics of Complex Technical Systems,
  Sandtorstra\ss{e} 1,
  39106 Magdeburg,
  Germany.\authorcr
  \email{saak@mpi-magdeburg.mpg.de},
  \orcid{0000-0001-5567-9637}
}

\shorttitle{Solving sequences of parametrized Lyapunov equations}
\shortauthor{D. Palitta, Z. Tomljanovi\'{c}, I. Naki\'{c}, J. Saak }

\keywords{%
  Parametrized Lyapunov equations,
  Recycling Krylov methods,
  Extended Krylov methods,
  Vibrational systems,
  Multi-agent systems.
}

\msc{MSC1, MSC2, MSC3}

\abstract{Sequences of parametrized Lyapunov equations can be encountered in many application settings. Moreover, solutions of such equations are often intermediate steps of an overall procedure whose main goal is the computation of $\trace(EX)$ where $X$ denotes the solution of a Lyapunov equation and $E$ is a given matrix.
  We are interested in addressing problems where the parameter dependency of the coefficient matrix is encoded as a low-rank modification to a \emph{seed}, fixed matrix. We propose two novel numerical procedures that fully exploit such a common structure. The first one builds upon \REV{the Sherman-Morrison-Woodbury (SMW) formula and} recycling Krylov techniques, and it is well-suited for small dimensional problems as it makes use of dense numerical linear algebra tools. The second algorithm can instead address large-scale  problems by relying on state-of-the-art projection techniques based on the extended Krylov subspace.
  We test the new algorithms on several problems arising in the study of damped vibrational systems and the analyses of output synchronization problems for multi-agent systems. Our results show that the algorithms we propose are superior to state-of-the-art techniques as they are able to remarkably speed up the computation of accurate solutions.
}

\novelty{Novel, efficient methods for solving sequences of parametrized Lyapunov equations have been designed. By taking full advantage of the parameter dependency structure, we are able to tackle both small-scale and large-scale problems. The numerical results show the superiority of the proposed methods when compared to state-of-the-art routines, especially in terms of computational time.}

\maketitle

%
%


\section{Introduction and setting}\label{Introduction}

The main goal of this study is the design of efficient numerical procedures for solving sequences of parameter-dependent Lyapunov equations of the form
\begin{equation}
  \label{Ljap jdba 1}
  A(\vvec)X + X {A(\vvec)}^{\tran} = - Q,
\end{equation}
where $A(\vvec)$ has the form
\begin{equation}
  \label{eq:formA}
  A(\vvec) = A_0 - B_{l} D(\vvec) B_{r}^{\tran}.
\end{equation}
Here $A_0, Q \in \RR^{n\times n}$, $B_l,B_r \in \RR^{n\times k }$ are fixed matrices and the matrix $D(\vvec)=\diag{(v_1,v_2,\ldots,v_k)}$ contains parameters $v_{i}\in \RR \setminus \left\{ 0 \right\}$, for $i=1\ldots,k$, encoded in the parameter vector
$\vvec=\begin{bmatrix}v_1, v_2, \ldots,v_k\end{bmatrix}^{\tran}\in  {(\RR\setminus \left\{ 0 \right\})}^{k}$\,\footnote{For the sake of simplicity, we assume the data of our problem to be real.
Our algorithms can handle complex data by applying straightforward modifications.}. Note that, without loss of generality, we assume that the parameters in $\vvec$ are nonzero.  \REV{The number of parameters $k$ is supposed to be very small, $k=\mathcal{O}(1)$, as this is the case in the application settings we are interested in; see Section~\ref{Applications}.}

Structured equations of the form~\eqref{Ljap jdba 1} arise in many problem settings.  Indeed, let $b_l^i$, $b_r^i$, $i=1,\ldots,k$, be the columns of the matrices $B_l$ and $B_r$, respectively. Then $A(\vvec)= A_0 - \sum_{i=1}^k v_i b_l^i{(b_r^i)}^{\tran} =: A_0 + \sum_{i=1}^k v_i A_i$, and hence matrices of the form~\eqref{eq:formA} can describe a class of parameter-dependent matrices  with an affine dependency on the parameters. Lyapunov equations with such coefficient matrices naturally arise in the study of vibrational systems~\cite{TRUH04}, parametric model reduction~\cite{son2021balanced}, in the design of low gain feedback~\cite{zhou2008parametric}, and the analysis of multi-agent systems~\cite{su2013semi,Kim11}.
The applications we are going to study more closely are optimal damping of vibrational systems and  output-synchronization problems for multi-agent systems.

\REV{We are especially interested in devising efficient procedures for calculating $\trace(EX(\vvec))$, for some matrix $E$, for many different parameter vectors $\vvec$.
Indeed, $\trace(EX(\vvec))$ often corresponds to the target value we are interested in computing, with calculating $X(\vvec)$ only being an intermediate step. For instance this is the case when computing the $H_2$-norm of a linear system. An efficient computation of $\trace(EX(\vvec))$ is crucial for obtaining competitive numerical schemes in terms of both computational performance and memory requirements for the applications mentioned above. Our method also covers efficient calculation of quantities of the form $ f(X(\vvec)) $, for a given function $f$ that is assumed to be \emph{additive}, i.e.\ $f(X+Y)=f(X)+f(Y)$. To simplify the notation, in the sequel we will denote $\trace(EX)$ by $f(X)$.}


Naturally, one can solve~\eqref{Ljap jdba 1} from scratch for any $\vvec$. For instance, if the problem dimension allows, one can apply the Bartels-Stewart algorithm~\cite{Bartels1972} to each instance of~\eqref{Ljap jdba 1}. However, this naive procedure does not exploit the attractive structure of~\eqref{Ljap jdba 1} and requires the computation of the Schur decomposition of $A(\vvec)$ any time $\vvec$ changes. This approach is not affordable in terms of computational cost if $\vvec$ varies a lot and $f(X(\vvec))$ needs to be evaluated many times.
More sophisticated schemes for~\eqref{Ljap jdba 1} can be found in the literature. For instance, the algorithms presented in~\cite{TRUH04,KT13} do exploit the structure of $A(\vvec)$ in~\eqref{eq:formA}. Even though these schemes are more performing than a naive application of the Bartels-Stewart algorithm, they are not designed to capitalize on a possible slow variation in $\vvec$ as it often happens within, e.g., an optimization routine. Moreover, they can be applied to small dimensional problems only.
Various approaches for solving the parameter-dependent Lyapunov equations can be
found in the literature, including those based on the conjugate gradient
method~\cite{kressner2014preconditioned}, low-rank updates~\cite{Kressner2019},
interpolation on manifolds~\cite{journee2010low}, and low-rank reduced basis
method~\cite{son2017solving, morPrzPB23}.
On the other hand, they are not well-suited for solving sequences of numerous parameter-dependent equations.

Leveraging the structure of the coefficient matrix $A(\vvec)$ --- $A(\vvec)$ is an affine function in $\vvec$ --- we propose two different schemes which are able to fully separate the parameter-independent calculations from the $\vvec$-dependent computations. The former will be performed once and for all in an~\emph{offline} step, whereas the latter operations take place \emph{online}, namely every time $\vvec$ changes.

The first procedure we suggest in this paper builds upon the work in~\cite{TRUH04,KT13}, which we enhance with a recycling Krylov technique. This routine makes use of dense linear algebra tools so that it is well suited for small dimensional problems, i.e., for moderate values of $n$. The second routine addresses the large-scale case employing projection methods for linear matrix equations.

Both our algorithms take inspiration from the low-rank update scheme presented in~\cite{Kressner2019}, for which the following assumption is needed.

\begin{Ass}\label{ass:2}
  The solution $X_0$ to the Lyapunov equation $A_0X + X A_0^{\tran} = -Q $ can be computed efficiently.
\end{Ass}

\paragraph{Synopsis of the paper:} In Section~\ref{Efficient trace computation}
we present the main contribution of this work. In particular, a \REV{novel scheme combining the SMW formula with recycling
Krylov-like methods is illustrated in Section~\ref{Small-scale setting: recycling Krylov
  methods}.}  As already mentioned, this scheme is able to efficiently deal with small dimensional
problems only as it employs dense numerical linear
algebra tools like, e.g., the full eigenvalue decomposition of $A_0$, so
that it cannot be applied in the large-scale setting. We address the latter
scenario in Section~\ref{Large-scale setting: a projection framework}, where a
sophisticated projection technique, based on the extended Krylov
subspace~\eqref{eq:EK_def}, is illustrated.  \REV{In this framework, we will assume to be able to efficiently solve linear systems with $A_0$.} Some details about two application
settings where equations of the form~\eqref{Ljap jdba 1} are encountered are
reported in Section~\ref{Applications}. In particular, damped vibrational
systems are treated in
Section~\ref{subsec:application-damped_vibrational_systems}, whereas in
Section~\ref{subsec:multi-agent systems} multi-agent systems are
considered.
A panel of diverse numerical experiments displaying the effectiveness of our methodology is presented in Section~\ref{Numerical examples}, while Section~\ref{Conclusions} collects our conclusions.

Throughout the paper, we adopt the following notation. The symbol $\otimes$ denotes the Kronecker product and $\mathop{\circ}$ denotes the Hadamard (component-wise) product. Capital letters, both in roman ($A$) and italic ($\mathcal{A}$), denote matrices with no particular structure, whereas capital bold letters ($\mathbf{A}$) denote matrices having a Kronecker structure. Given a matrix $X\in\mathbb{R}^{m\times n}$, $\text{vec}(X)\in\mathbb{R}^{mn}$ denotes the vector obtained by stacking the columns of $X$ on top of each other. The matrix $I_n$ is the $n\times n$ identity matrix; the subscript is omitted whenever the dimension of $I$ is clear from the context. The symbol $e_i$ denotes the $i$-th canonical basis vector of $\mathbb{R}^s$ where $s$ is either specified in the text or clear from the context. The symbol $\delta_{i,j}$ denotes the Kronecker delta, i.e., $\delta_{i,j}=1$ if $i=j$ and zero otherwise.


\section{The novel solution methods}\label{Efficient trace computation}

Equation~\eqref{Ljap jdba 1} can be written as
\[
  (A_0-B_{l} D(\vvec) B_{r}^{\tran})X(\vvec)+X(\vvec)
  {(A_0- B_{l} D(\vvec) B_{r}^{\tran})}^{\tran}= -Q.
\]
By exploiting the low rank of $B_{l} D(\vvec) B_{r}^{\tran}$ and using the low-rank update approach presented in~\cite{Kressner2019}, we can also rewrite \(X\) as the sum of a fixed matrix and a low-rank update term, i.e.,
\begin{equation}\label{eq:splitXv}
  X(\vvec)=X_0+X_{\delta}(\vvec),
\end{equation}
where $X_0$ and $X_{\delta}(\vvec)$ are as follows. The matrix $X_0$ is independent of the parameter vector $\vvec$, and it solves the Lyapunov equation
\begin{equation}\label{eq:seedLE}
  A_0X_0+X_0A_0^{\tran}=- Q,
\end{equation}
whereas $X_{\delta}(\vvec)$ is such that
\begin{equation}\label{eq:Lyap_deltaX}
  A(\vvec)X_{\delta}(\vvec)+X_{\delta}(\vvec) {A(\vvec)}^{\tran}=B_{l} D(\vvec) B_{r}^{\tran} X_0+X_0 B_{r} D(\vvec) B_{l}^{\tran}.
\end{equation}
Assumption~\ref{ass:2} implies that $X_0$ can be efficiently calculated.

We notice that the right-hand side in~\eqref{eq:Lyap_deltaX} can be written as
\begin{equation}\label{eq:rhs_newexpression}
  B_{l} D(\vvec) B_{r}^{\tran} X_0+X_0 B_{r} D(\vvec) B_{l}^{\tran}=[X_0 B_{r},B_{l}]\begin{bmatrix}                                                                              0  & D(\vvec)\\                                                                            D(\vvec) & 0 \\                                                                                           \end{bmatrix}{[X_0 B_{r},B_{l}]}^{\tran}= P \left( \mathcal{I}_2\otimes D(\vvec)\right)
  P^{\tran},
\end{equation}
where $P=[X_0 B_{r},B_{l}]$ has rank $2k$, and $\mathcal{I}_2=\begin{bmatrix}
  0&1\\
  1& 0\\
\end{bmatrix}$.
This formulation of the right-hand side will be crucial for the projection-based method we design for large-scale problems in Section~\ref{Large-scale setting: a projection framework}.

In the next sections we show how to efficiently compute $X_{\delta}(\vvec)$. In particular, it turns out that many of the required operations do not depend on $\vvec$ so that they can be carried out in the offline stage.
Therefore, we split the solution process into offline and online phases, where only the operations that actually depend on $\vvec$ are performed online.

\subsection{Small-scale setting: SMW and recycling Krylov methods}\label{Small-scale setting: recycling Krylov methods}
In this section we address the numerical solution of the Lyapunov equation~\eqref{eq:Lyap_deltaX} assuming the problem dimension $n$ to be small, namely it is such that dense linear algebra operations costing up to $\mathcal O(n^3)$ flops are feasible.

Even though there are many efficient methods for solving Lyapunov equations of
moderate dimensions, see, e.g.,~\cite{Koe21} and references therein,  we want to efficiently solve~\eqref{eq:Lyap_deltaX} for a sequence of parameter vectors $\vvec$.

The strategy proposed in this section builds upon the work in~\cite{TRUH04,KT13}. We extend their approach to the more general class of problems of the form~\eqref{Ljap jdba 1}, and we allow for more than one parameter, \REV{i.e., $k$ can be larger than one}. The scheme we are going to present relies on the computation of the full eigenvalue decomposition of $A_0$, namely $A_0 = Q_0 \Lambda_0 Q_0^{-1}$ with $Q_0\in \CC^{n\times n}$ and $\Lambda_0 = \diag (\lambda_1,\ldots, \lambda_n)$, $\lambda_i\in \CC$, $i=1,\ldots,n$.



Considering the notation introduced in~\eqref{eq:rhs_newexpression}, the Kronecker form of~\eqref{eq:Lyap_deltaX} reads as follows
\begin{equation*}
  \left(I\otimes A_0+A_0\otimes I-
    I\otimes
    \left(B_{l} D(\vvec) B_{r}^{\tran}\right)-\left(B_{l} D(\vvec) B_{r}^{\tran}\right)\otimes I\right)\vecop(X_{\delta}(\vvec))=\left(P\otimes P\right)\vecop\left( \mathcal{I}_2\otimes D(\vvec)\right).
\end{equation*}

If  $A_0=Q_0\Lambda_0Q_0^{-1}$, $\Lambda_0=\mathop{\mathrm{diag}}(\lambda_1,\ldots,\lambda_{n})$,
we can write
\begin{multline*}
  (Q_0\otimes Q_0)\left(\LL_0-
    I\otimes
    \left(Q_0^{-1}B_{l} D(\vvec) B_{r}^{\tran}Q_0\right)-\left(Q_0^{-1}B_{l} D(\vvec) B_{r}^{\tran}Q_0\right)\otimes I\right)(Q_0^{-1}\otimes Q_0^{-1})\vecop(X_{\delta}(\vvec))=\\\left(P\otimes P\right)\vecop\left( \mathcal{I}_2\otimes D(\vvec)\right),
\end{multline*}
where  $\LL_0:=I\otimes \Lambda_0+\Lambda_0\otimes I$, so that we obtain
\begin{multline*}
  \left(\LL_0-
    \left[Q_0^{-1}B_{l} \otimes I, I\otimes Q_0^{-1}B_{l} \right]\mathbf{D}(\vvec){\left[Q_0^{\tran}B_{r}\otimes I,I\otimes Q_0^{\tran} B_{r}\right]}^{\tran}\right)\vecop(Q_0^{-1}X_{\delta}(\vvec)Q_0^{-\tran})=\\\left(Q_0^{-1}P\otimes Q_0^{-1}P\right)\vecop\left(\mathcal{I}_2\otimes D(\vvec) \right),
\end{multline*}
with $\mathbf{D}(\vvec)=\begin{bmatrix}                          D(\vvec)\otimes I & \\
  & I\otimes D(\vvec)\\
\end{bmatrix}$.

By denoting $\mathbf{M}:=\left[Q_0^{-1}B_{l} \otimes I,\, I\otimes Q_0^{-1}B_{l} \right]$, and $\mathbf{N}:=\left[Q_0^{\tran}B_{r}\otimes I,\,I\otimes Q_0^{\tran} B_{r}\right]$,  \\ the  Sherman–Morrison–Woodbury formula (see, e.g.,~\cite{Golub2013}) implies that
\begin{align}\label{eq:deltaX_SMW2}
  \vecop(X_{\delta}(\vvec))=&(Q_0\otimes Q_0)\bigg(\LL_0^{-1}\left(Q_0^{-1}P\otimes Q_0^{-1}P\right)\vecop\left(\mathcal{I}_2\otimes D(\vvec) \right)+\notag\\
                            &\LL_0^{-1} \mathbf{M}{\left({\mathbf{D}(\vvec)}^{-1}- \mathbf{N}^{\tran}\LL_0^{-1} \mathbf{M}\right)}^{-1}\mathbf{N}^{\tran}\LL_0^{-1}\left(Q_0^{-1}P\otimes Q_0^{-1}P\right)\vecop\left(\mathcal{I}_2\otimes D(\vvec)\right)\bigg).
\end{align}

\REV{The use of the Sherman-Morrison-Woodbury formula in the matrix setting is not new; see, e.g.,~\cite{HaoSim2021,Kuz2013},~\cite[Section 3]{Damm2008},~\cite[Section 5]{Massetal2018},~\cite{BenLP08,Pen00a}. However, applying the general-purpose schemes presented in these papers to our parameter-dependent framework would not lead to an efficient solution process. Indeed, this would be equivalent to explicitly computing~\eqref{eq:deltaX_SMW2} every time $\vvec$ changes. Our} main goal here is to divide the operations that depend on $\vvec$ from those that do not, as much as possible, such that the \(\vvec\)-independent calculations can be performed offline, \REV{only once. In the interest of the reader, we thus spell out the details of our procedure, in spite of some similarities with prior work.}

We first focus on the term $(Q_0\otimes Q_0)\left(\LL_0^{-1}\left(Q_0^{-1}P\otimes Q_0^{-1}P\right)\vecop\left(\mathcal{I}_2\otimes D(\vvec) \right)\right)$.
The computation of
\[
  z(\vvec)=(Q_0\otimes Q_0)\left(\LL_0^{-1}\left(Q_0^{-1}P\otimes
      Q_0^{-1}P\right)\vecop\left(\mathcal{I}_2\otimes D(\vvec)
    \right)\right),
\]
is equivalent to solving the Lyapunov equation
\[
  \Lambda_0 \widetilde Z(\vvec)+\widetilde Z(\vvec)\Lambda_0^{\tran}=Q_0^{-1}P\left(\mathcal{I}_2\otimes D(\vvec) \right)P^{\tran}Q_0^{-\tran},
\]
where \(z(\vvec)=\vecop(Z(\vvec))\), and \(Z(\vvec):= Q_0\widetilde Z(\vvec)Q_0^{\tran}\).
Since $\Lambda_0=\mathop{\mathrm{diag}}(\lambda_1,\ldots,\lambda_{n})$, by defining the Cauchy matrix $\cL\in\CC^{n\times n}$ whose $(i,j)$-th entry is given by $\cL_{i,j}=1/(\lambda_{i}+ \lambda_{j})$, we can write
\begin{equation}\label{eq:Z(v)}
  Z(\vvec)=Q_0\left(\cL\mathop{\circ}\left(Q_0^{-1}P\left(\mathcal{I}_2\otimes D(\vvec) \right)P^{\tran}Q_0^{-\tran}\right)\right)Q_0^{\tran}.
\end{equation}

The matrix $\mathcal{I}_2\otimes D(\vvec)$ can be written as
\[\mathcal{I}_2\otimes D(\vvec)=\sum_{i=1}^k v_{i}(e_{k+i}e_{i}^{\tran}+e_{i}e_{k+i}^{\tran}),\]
and by plugging this expression into~\eqref{eq:Z(v)}, we get
\begin{equation}\label{eq:Z(v)2}
  Z(\vvec)=\sum_{i=1}^{k}v_{i}Z_{i}=\sum_{i=1}^{k}v_{i}Q_{0}\widetilde Z_{i}Q_{0}^{\tran},\quad \widetilde Z_{i}:=\cL\mathop{\circ}\left(Q_{0}^{-1}P\left(e_{k+i}e_{i}^{\tran}+e_{i}e_{k+i}^{\tran}\right)P^{\tran}Q_0^{-\tran}\right).
\end{equation}
We can thus compute the matrices $Z_{i}$ before any change in the parameters $\vvec$ takes place. Whenever we need $Z(\vvec)$, we just compute the linear combination in~\eqref{eq:Z(v)2}.

The second term in~\eqref{eq:deltaX_SMW2} can be written as
\begin{multline}\label{eq:SMW_secondterm}
  \left(Q_0\otimes Q_0\right)\LL_0^{-1} \mathbf{M}{\left({\mathbf{D}(\vvec)}^{-1}-\mathbf{N}^{\tran}\LL_0^{-1} \mathbf{M}\right)}^{-1} \mathbf{N}^{\tran}\LL_0^{-1}\left(Q_0^{-1}P\otimes Q_0^{-1}P\right)\vecop\left(\mathcal{I}_2\otimes D(\vvec)\right)=\\
  \left(Q_0\otimes Q_0\right)
  \LL_0^{-1} \mathbf{M}{\left({\mathbf{D}(\vvec)}^{-1}-\mathbf{N}^{\tran}\LL_{0}^{-1} \mathbf{M}\right)}^{-1}\mathbf{N}^{\tran}\left(\sum_{i=1}^{k}v_{i}\vecop\left(\widetilde Z_{i}\right)\right).
\end{multline}
Recalling that $\mathbf{N}:=\left[Q_0^{\tran}B_{r}\otimes I,I\otimes Q_0^{\tran} B_{r}\right]$, we have
\[
  \mathbf{N}^{\tran}\vecop\left(\widetilde  Z_{i}\right)=
  \begin{bmatrix}
    \vecop\left(\widetilde  Z_{i} Q_0^{\tran}B_{r}\right)\\
    \vecop\left(B_{r}^{\tran}Q_0\widetilde  Z_{i}\right)\\
  \end{bmatrix}, \quad\text{so that} \quad
  \mathbf{N}^{\tran}\left(\sum_{i=1}^{k}v_{i}\vecop\left(\widetilde
      Z_{i}\right)\right)=\sum_{i=1}^{k}v_{i}
  \begin{bmatrix}
    \vecop\left(\widetilde  Z_{i} Q_{0}^{\tran}B_{r}\right)\\
    \vecop\left(B_{r}^{\tran}Q_0\widetilde  Z_{i}\right)\\
  \end{bmatrix}.
\]

We now focus on the solution of the $2nk\times 2nk$ linear system with ${\mathbf{D}(\vvec)}^{-1}-\mathbf{N}^{\tran}\LL_0^{-1} \mathbf{M}$. We first show how to efficiently assemble the coefficient matrix itself.

While ${\mathbf{D}(\vvec)}^{-1}=
\begin{bmatrix}
  {D(\vvec)}^{-1}\otimes I& \\
  &I\otimes {D(\vvec)}^{-1}\\
\end{bmatrix}$
is a diagonal matrix, the structure of $\mathbf{N}^{\tran}\LL_0^{-1} \mathbf{M}$ is more involved.
A first study of its structure can be found in~\cite{TRUH04}. Unlike what is done in~\cite{TRUH04}, here we exploit the diagonal pattern of $\LL_0^{-1}$, which leads to a useful representation of the blocks of $\mathbf{N}^{\tran}\LL_0^{-1} \mathbf{M}$ when the latter is viewed as a $2\times 2$ block matrix. Such a block structure will help us to design efficient preconditioners for the solution of the linear system in~\eqref{eq:SMW_secondterm} as well.

Let $ \mathbf{M}_1= Q_0^{-1}B_{l}\otimes I$, $ \mathbf{M}_2=I\otimes Q_0^{-1}B_{l} $, $\mathbf{N}_1=  Q_0^{\tran}B_{r}\otimes I$, and $\mathbf{N}_2=I \otimes  Q_0^{\tran}B_{r}$. Then we can write
\[
  \mathbf{N}^{\tran}\LL_0^{-1} \mathbf{M}=
  \begin{bmatrix}
    \mathbf{N}_1^{\tran}\LL_0^{-1} \mathbf{M}_1 & \mathbf{N}_1^{\tran}\LL_0^{-1} \mathbf{M}_2\\
    \mathbf{N}_2^{\tran}\LL_0^{-1} \mathbf{M}_1 & \mathbf{N}_2^{\tran}\LL_0^{-1}
    \mathbf{M}_2
  \end{bmatrix}.
\]
We first focus on the $(1,1)$-block, namely $\mathbf{N}_1^{\tran}\LL_0^{-1} \mathbf{M}_1\in\CC^{nk\times nk}$.
By recalling that the $h$-th basis vector of $\CC^{nk}$ can be written as $e_h=\vecop(e_{s}e_{t}^{\tran})$, with $e_{s}$ and $e_{t}$ being the canonical vectors in $\CC^{n}$ and $\CC^{k}$, respectively, and $h=(t-1)n+s$,  we can write the
$(i,j)$-th entry of $\mathbf{N}_1^{\tran}\LL_0^{-1} \mathbf{M}_1\in\CC^{nk\times nk}$ as
\begin{align*}
  e_{i}^{\tran}\mathbf{N}_{1}^{\tran}\LL_0^{-1}
  \mathbf{M}_{1}e_{j}=&{\vecop(e_{s}e_{t}^{\tran})}^{\tran}\mathbf{N}_{1}^{\tran}\LL_{0}^{-1}
                        \mathbf{M}_{1}\vecop(e_{q}e_{p}^{\tran}),
\end{align*}
where $i=(t-1)n+s$ and $j=(p-1)n+q$. Since $\mathbf{N}_1$ and $ \mathbf{M}_1$ have a Kronecker structure, we have
\begin{align*}
  e_{i}^{\tran}\mathbf{N}_{1}^{\tran}\LL_{0}^{-1} \mathbf{M}_{1}e_{j}=&{\vecop(e_{s}e_{t}^{\tran}B_{r}^{\tran}Q_0)}^{\tran}\LL_{0}^{-1}\vecop(e_{q}e_{p}^{\tran}B_{l}^{\tran}Q_{0}^{-\tran})= {\vecop(e_{s}e_{t}^{\tran}B_{r}^{\tran}Q_0)}^{\tran}\vecop(\cL\mathop{\circ}(e_{q}e_{p}^{\tran}B_{l}^{\tran}Q_{0}^{-\tran}))\\
  =&\trace{\left(Q_{0}^{\tran}B_{r} e_{t}e_{s}^{\tran}(\cL\mathop{\circ}(e_{q}e_{p}^{\tran}B_{l}^{\tran}Q_{0}^{-\tran}))\right)}=e_{s}^{\tran}
     (\cL\mathop{\circ}(e_{q}e_{p}^{\tran}B_{l}^{\tran}Q_{0}^{-\tran}))Q_{0}^{\tran}B_{r} e_{t},
\end{align*}
where the last equality holds thanks to the cyclic property of the trace operator. Now, by applying
the following property of the Hadamard product
\[x^{\tran}(A\mathop{\circ} B)y=\trace{(\mathop{\mathrm{diag}}(x)A\mathop{\mathrm{diag}}(y)B^{\tran})},\]
we get
\begin{align}\label{eq:N1M1}
  e_{i}^{\tran}\mathbf{N}_1^{\tran}\LL_{0}^{-1} \mathbf{M}_{1}e_{j}=&\trace{\left(\mathop{\mathrm{diag}}(e_s)\cL\mathop{\mathrm{diag}}(Q_0^{\tran}B_{r} e_{t})
                                                                      Q_0^{-1}B_{l} e_{p}e_{q}^{\tran}\right)}=e_{q}^{\tran}\mathop{\mathrm{diag}}(e_{s})\cL\mathop{\mathrm{diag}}(Q_{0}^{\tran}B_{r} e_{t})
                                                                      Q_0^{-1}B_{l} e_p\notag\\
  =& e_{q}^{\tran}e_{s}e_{s}^{\tran}\cL\mathop{\mathrm{diag}}(Q_{0}^{\tran}B_{r} e_{t})
     Q_0^{-1}B_{l} e_{p}=
     \delta_{q,s}\cdot e_{s}^{\tran}\cL\mathop{\mathrm{diag}}(Q_0^{\tran}B_{r} e_{t})
     Q_0^{-1}B_{l} e_p\notag\\
  =& \delta_{q,s}\cdot e_{s}^{\tran}\left(\cL\left((Q_0^{\tran}B_{r} e_t)\mathop{\circ} (Q_0^{-1}B_{l} e_p)\right)\right),
\end{align}
where $\delta_{q,s}$ denotes the Kronecker delta. The relation in~\eqref{eq:N1M1} shows that $\mathbf{N}_1^{\tran}\LL_0^{-1} \mathbf{M}_1$ is a $k\times k$ block matrix whose blocks are all diagonal. In particular, the block in the $(t,p)$ position is given by the matrix $\mathop{\mathrm{diag}}\left(\cL\left((Q_0^{\tran}B_{r} e_t)\mathop{\circ} (
    Q_0^{-1}B_{l} e_p)\right)\right)$.

We now derive the structure of the $(2,2)$-block $\mathbf{N}_2^{\tran}\LL_0^{-1} \mathbf{M}_2$. Following the same reasoning as before, we have
\begin{align*}
  e_{i}^{\tran}\mathbf{N}_2^{\tran}\LL_{0}^{-1} \mathbf{M}_{2}e_{j}=&{\vecop(Q_{0}^{\tran}B_{r} e_{t}e_{s}^{\tran})}^{\tran}\LL_{0}^{-1}\vecop(Q_{0}^{-1}B_{l} e_{p}e_{q}^{\tran})= {\vecop(Q_{0}^{\tran}B_{r} e_{t}e_{s}^{\tran})}^{\tran}\vecop(\cL\mathop{\circ}(Q_0^{-1}B_{l} e_{p}e_{q}^{\tran}))\\
  =&\trace{\left(e_s e_t^{\tran}B_{r} ^{\tran}Q_0
     (\cL\mathop{\circ}(Q_0^{-1}B_{l} e_{p}e_{q}^{\tran}))\right)}=e_t^{\tran}B_{r} ^{\tran}Q_0
     (\cL\mathop{\circ}(Q_0^{-1}B_{l} e_{p}e_{q}^{\tran})) e_s.
\end{align*}
Notice that in the expression above we wrote $e_{i}=\vecop(e_{t}e_{s}^{\tran})$, with $e_{t}$, $e_{s}$ canonical vectors in $\CC^{k}$ and $\CC^{n}$ respectively, so that $i=(s-1)k+t$. Similarly, $j=(q-1)k+p$. As before, we can write
\begin{align*}
  e_{i}^{\tran}\mathbf{N}_2^{\tran}\LL_0^{-1} \mathbf{M}_2e_{j}=&\trace{\left(\mathop{\mathrm{diag}}(Q_0^{\tran}B_{r} e_t)
                                                                  \cL\mathop{\mathrm{diag}}(e_s)e_q
                                                                  e_p^{\tran}B_{l}^{\tran}Q_0^{-\tran}\right)}=e_p^{\tran}B_{l}^{\tran}Q_0^{-\tran}\mathop{\mathrm{diag}}(Q_0^{\tran}B_{r} e_t)
                                                                  \cL e_{s}e_{s}^{\tran} e_q\\
  =&\delta_{s,q}\cdot e_p^{\tran}B_{l}^{\tran}Q_0^{-\tran}\mathop{\mathrm{diag}}(Q_0^{\tran}B_{r} e_t)
     \cL e_s= \delta_{s,q}\cdot{\left((Q_0^{-1} B_{l} e_p)\mathop{\circ}(Q_0^{\tran}B_{r} e_t)
     \right)}^{\tran}\cL e_s.
\end{align*}
Therefore, $\mathbf{N}_2^{\tran}\LL_0^{-1} \mathbf{M}_2$ is a block diagonal matrix with $n$ diagonal blocks of order $k$. In particular, the $(t,p)$-th entry of the $s$-th diagonal block is given by
${\left((Q_0^{-1} B_{l} e_p)\mathop{\circ}(Q_0^{\tran}B_{r} e_t)
  \right)}^{\tran}\cL e_s$.

Unlike its diagonal blocks, the off-diagonal blocks of $\mathbf{N}^{\tran}\LL_0^{-1} \mathbf{M}$, namely $\mathbf{N}_2^{\tran}\LL_0^{-1} \mathbf{M}_1$, and $\mathbf{N}_1^{\tran}\LL_0^{-1} \mathbf{M}_2$, do not possess a structured sparsity pattern. Nevertheless, we can still apply the same strategy we employed above to construct $\mathbf{N}_2^{\tran}\LL_0^{-1} \mathbf{M}_1$ and $\mathbf{N}_1^{\tran}\LL_0^{-1} \mathbf{M}_2$ while avoiding the explicit computation of $\mathbf{M}_1$, $\mathbf{M}_2$, $\mathbf{N}_1$, and $\mathbf{N}_2$. If $i=(s-1)k+t$ and $j=(p-1)n+q$, we have
\begin{align}\label{eq:N2M1}
  e_{i}^{\tran}\mathbf{N}_2^{\tran}\LL_0^{-1} \mathbf{M}_{1}e_{j}
  =&\vecop{(Q_0^{\tran}B_{r}
     e_{t}e_{s}^{\tran})}^{\tran}\LL_0^{-1}\vecop(e_{q}e_{p}^{\tran}B_{l}^{\tran}Q_0^{-\tran})
     = \vecop{(Q_0^{\tran}B_{r} e_{t}e_{s}^{\tran})}^{\tran}\vecop(\cL\mathop{\circ}(e_{q}e_{p}^{\tran}B_{l}^{\tran}Q_0^{-\tran}))\notag\\
  =&\trace{\left(e_{s}e_{t}^{\tran}B_{r}^{\tran}Q_0
     (\cL\mathop{\circ}(e_{q}e_{p}^{\tran}B_{l}^{\tran}Q_0^{-\tran}))\right)} = e_{t}^{\tran}B_{r}^{\tran}Q_0
     (\cL\mathop{\circ}(e_{q}e_{p}^{\tran}B_{l}^{\tran}Q_0^{-\tran})) e_{s}\notag\\
  =& \trace{\left(\mathop{\mathrm{diag}}(Q_{0}^{\tran}B_{r} e_{t})\cL\mathop{\mathrm{diag}}(e_{s})Q_{0}^{-1}B_{l} e_{p}e_{q}^{\tran}\right)}=
     e_q^{\tran}\mathop{\mathrm{diag}}(Q_0^{\tran}B_{r} e_t)\cL\mathop{\mathrm{diag}}(e_s)Q_0^{-1}B_{l} e_p\notag\\
  =& (e_q^{\tran}Q_0^{\tran}B_{r} e_t)(e_q^{\tran}\cL e_s)(
     e_s^{\tran}Q_0^{-1}B_{l} e_p).
\end{align}
Similarly, if $i=(t-1)n+s$ and $j=(q-1)k+p$, it holds
\begin{align}\label{eq:N1M2}
  e_{i}^{\tran}\mathbf{N}_1^{\tran}\LL_0^{-1} \mathbf{M}_2e_{j}
  =&{\vecop(e_{s}e_{t}^{\tran}B_{r}^{\tran}Q_0)}^{\tran}\LL_0^{-1}\vecop(Q_0^{-1}B_{l}e_{p}e_{q}^{\tran})
     = {\vecop(e_{s}e_{t}^{\tran}B_{r}^{\tran}Q_0)}^{\tran}\vecop(\cL\mathop{\circ}(Q_0^{-1}B_{l} e_{p}e_{q}^{\tran}))\notag\\
  =&\trace{\left(Q_0^{\tran}B_{r} e_{t}e_{s}^{\tran}
     (\cL\mathop{\circ}(Q_0^{-1}B_{l} e_{p}e_{q}^{\tran}))\right)} = e_{s}^{\tran}
     (\cL\mathop{\circ}(Q_0^{-1}B_{l} e_{p}e_{q}^{\tran})) Q_0^{\tran}B_{r} e_t\notag\\
  =& \trace{\left(\mathop{\mathrm{diag}}(e_s)\cL\mathop{\mathrm{diag}}(Q_0^{\tran}B_{r} e_t)e_{q}e_{p}^{\tran}B_{l}^{\tran} Q_0^{-\tran} \right)}=
     e_p^{\tran}B_{l}^{\tran} Q_0^{-\tran}\mathop{\mathrm{diag}}(e_s)\cL\mathop{\mathrm{diag}}(Q_0^{\tran}B_{r} e_t)e_q\notag\\
  =& (e_p^{\tran}B_{l}^{\tran} Q_0^{-\tran} e_s)(e_s^{\tran}\cL e_q)(
     e_q^{\tran}Q_0^{\tran}B_{r} e_t).
\end{align}

The construction of $\mathbf{N}^{\tran}\LL_0^{-1}\mathbf{M}$ can be carried out before starting changing $\vvec$. Once this task is accomplished, we have to solve a $2nk\times 2nk$ linear system with ${\mathbf{D}(\vvec)}^{-1}-\mathbf{N}^{\tran}\LL_0^{-1}\mathbf{M}$ every time we have to solve~\eqref{Ljap jdba 1} for a different $\vvec$.

We want to efficiently compute the vector $w(\vvec)\in\CC^{2nk}$ such that
\begin{equation}\label{eq:SMW_linearsystem}
  \left({\mathbf{D}(\vvec)}^{-1}-\mathbf{N}^{\tran}\LL_0^{-1}\mathbf{M}\right)w(\vvec)=\sum_{i=1}^{k}v_{i}
  \begin{bmatrix}
    \vecop\left(\widetilde  Z_{i} Q_0^{\tran}B_{r}\right)\\
    \vecop\left(B_{r}^{\tran}Q_0\widetilde  Z_{i}\right)\\
  \end{bmatrix}.
\end{equation}
We assume the coefficient matrix and the right-hand side in~\eqref{eq:SMW_linearsystem} to slowly change with $\vvec$.
For instance, if~\eqref{Ljap jdba 1} is embedded in a parameter optimization procedure, the parameters $\vvec_{\ell}={[v_1^{(\ell)},\ldots,v_k^{(\ell)}]}^{\tran}$ in one step do not dramatically differ from the ones used in the previous step, namely
$\vvec_{\ell-1}={[v_1^{(\ell-1)},\ldots,v_k^{(\ell-1)}]}^{\tran}$, especially in the later steps of the adopted optimization procedure.
For a sequence of linear systems of this nature, recycling Krylov techniques represent one of the most valid families of solvers available in the literature. See, e.g.,~\cite{Parks2006,Gaul2014} and the recent survey paper~\cite{Soodhalter2020}.

We employ the GCRO-DR method proposed in~\cite{Parks2006} to solve the sequence of linear systems in~\eqref{eq:SMW_linearsystem}. See~\cite{Parks2016a,Parks2016} for a Matlab implementation and a thorough discussion on certain computational aspects of this algorithm.

Once the $(\ell-1)$-th linear system is solved, GCRO-DR uses the space spanned by $s_{\ell}$ approximate eigenvectors of ${\mathbf{D}(\vvec_{\ell-1})}^{-1}-\mathbf{N}^{\tran}\LL_0^{-1}\mathbf{M}$
to enhance the solution of the $\ell$-th linear system
\[
  \left({\mathbf{D}(\vvec_\ell)}^{-1}-\mathbf{N}^{\tran}\LL_{0}^{-1}\mathbf{M}\right)w(\vvec_\ell)=\sum_{i=1}^{k}v_{i}^{(\ell)}
  \begin{bmatrix}
    \vecop\left(\widetilde  Z_{i} Q_0^{\tran}B_{r}\right)\\
    \vecop\left(B_{r}^{\tran}Q_0\widetilde  Z_{i}\right)\\
  \end{bmatrix}.
\]
We employ the $s_\ell$ eigenvectors corresponding to the $s_\ell$ eigenvalues of smallest magnitude\footnote{This is the default strategy in~\cite{Parks2016a}.}, but different approximate eigenvectors can be used as well; see~\cite[Section 2.4]{Parks2006}.

\REV{The convergence of GCRO-DR is driven by the spectral properties of ${\mathbf{D}(\vvec)}^{-1}-\mathbf{N}^{\tran}\LL_0^{-1}\mathbf{M}$.
However, the latter ones are extremely tricky to identify in general. Only by allowing further assumptions on $B_l$, $B_r$, and $A_0$, we may come up with sensible statements. For instance, if $B_l=B_r$ and $A_0$ is Hermitian, ${\mathbf{D}(\vvec)}^{-1}-\mathbf{N}^{\tran}\LL_0^{-1}\mathbf{M}$ is Hermitian too and GCRO-DR can take advantage of that; see~\cite[Theorem 3.1]{Parks2006}. Similarly, if, in addition, all the parameters $v_i$'s are, e.g., strictly positive and $A_0$ is stable, then ${\mathbf{D}(\vvec)}^{-1}-\mathbf{N}^{\tran}\LL_0^{-1}\mathbf{M}$ is also positive definite. GCRO-DR can take advantage of this property as well.
}

GCRO-DR allows for preconditioning.
In particular, thanks to the pattern of ${\mathbf{D}(\vvec)}^{-1}-\mathbf{N}^{\tran}\LL_0^{-1}\mathbf{M}$ we are able to design efficient preconditioning operators, which can be cheaply tuned to address the variation in the parameters.

We have
\[
  {\mathbf{D}(\vvec)}^{-1}-\mathbf{N}^{\tran}\LL_0^{-1}\mathbf{M} =
  \begin{bmatrix}
    {D(\vvec)}^{-1}\otimes I - \mathbf{N}_1^{\tran}\LL_0^{-1} \mathbf{M}_1 & \mathbf{N}_1^{\tran}\LL_0^{-1} \mathbf{M}_2\\
    \mathbf{N}_2^{\tran}\LL_0^{-1} \mathbf{M}_1 & I\otimes {D(\vvec)}^{-1} -
    \mathbf{N}_2^{\tran}\LL_0^{-1}
    \mathbf{M}_2\\
  \end{bmatrix},
\]
and we want to maintain the $(2\times 2)$-block structure in the preconditioner as well.

In ${\mathbf{D}(\vvec)}^{-1}-\mathbf{N}^{\tran}\LL_0^{-1}\mathbf{M}$, only the diagonal blocks ${D(\vvec)}^{-1}\otimes I - \mathbf{N}_1^{\tran}\LL_0^{-1} \mathbf{M}_1$ and
$I\otimes {D(\vvec)}^{-1} - \mathbf{N}_2^{\tran}\LL_0^{-1} \mathbf{M}_2$ depend on the current parameters. Moreover, both ${D(\vvec)}^{-1}\otimes I$ and $I\otimes {D(\vvec)}^{-1}$ are diagonal matrices so that the sparsity pattern of $\mathbf{N}_1^{\tran}\LL_0^{-1} \mathbf{M}_1$ and $\mathbf{N}_2^{\tran}\LL_0^{-1} \mathbf{M}_2$ is preserved. In particular, ${D(\vvec)}^{-1}\otimes I - \mathbf{N}_1^{\tran}\LL_0^{-1} \mathbf{M}_1$ is still a $k\times k$ block matrix with diagonal blocks whereas $I\otimes {D(\vvec)}^{-1} - \mathbf{N}_2^{\tran}\LL_0^{-1} \mathbf{M}_2$ is block-diagonal with $k\times k$ blocks on the diagonal.
By exploiting such a significant sparsity pattern, we are thus able to
efficiently solve linear systems
with ${D(\vvec)}^{-1}\otimes I - \mathbf{N}_1^{\tran}\LL_{0}^{-1} \mathbf{M}_1$ and $I\otimes {D(\vvec)}^{-1} - \mathbf{N}_2^{\tran}\LL_0^{-1} \mathbf{M}_2$ by a sparse direct method, regardless of the change in~$\vvec$.

The off-diagonal blocks of ${\mathbf{D}(\vvec)}^{-1}-\mathbf{N}^{\tran}\LL_0^{-1}\mathbf{M}$, namely $\mathbf{N}_1^{\tran}\LL_0^{-1} \mathbf{M}_2$ and $\mathbf{N}_2^{\tran}\LL_0^{-1} \mathbf{M}_1$,  are dense in general. We approximate these blocks by means of their truncated SVD (TSVD) of order $\widebar p$, for a user-defined $\widebar p>0$. Notice that these TSVDs can be computed once and for all, since neither $\mathbf{N}_1^{\tran}\LL_0^{-1} \mathbf{M}_2$ nor $\mathbf{N}_2^{\tran}\LL_0^{-1} \mathbf{M}_1$ depend on $\vvec$.
We denote the results of the TSVD by $\mathcal N_1 {\mathcal M_2}^{\tran}$ and $\mathcal N_2 {\mathcal M_1}^{\tran}$ where $\mathcal N_{i}$, $\mathcal M_{i}\in\CC^{nk\times \widebar p}$, $i=1,2$, and
$\mathcal N_1 {\mathcal M_2}^{\tran}\approx \mathbf{N}_1^{\tran}\LL_0^{-1} \mathbf{M}_2$ whereas
$\mathcal N_2 {\mathcal M_1}^{\tran}\approx \mathbf{N}_2^{\tran}\LL_0^{-1} \mathbf{M}_1$.

Consequently, the preconditioning operator we employ is
\begin{equation}\label{eq:prec}
  \mathcal{P}(\vvec)=
  \begin{bmatrix}
    {D(\vvec)}^{-1}\otimes I - \mathbf{N}_1^{\tran}\LL_0^{-1} \mathbf{M}_1 & \mathcal N_1 {\mathcal M_2}^{\tran}\\
    \mathcal{N}_2 {\mathcal{M}_1}^{\tran} & I\otimes {D(\vvec)}^{-1} -
    \mathbf{N}_2^{\tran}\LL_0^{-1} \mathbf{M}_2\\
  \end{bmatrix}\approx {\mathbf{D}(\vvec)}^{-1}-\mathbf{N}^{\tran}\LL_0^{-1}\mathbf{M}.
\end{equation}
We use right preconditioning within GCRO-DR so that at each  iteration a linear system with
$\mathcal{P}(\vvec)$ needs to be solved. However, such a task is particularly cheap. Indeed, thanks to the \(2 \times 2\) block structure of $\mathcal{P}(\vvec)$, we define its Schur complement

\[
  \mathcal{S}(\vvec)= {D(\vvec)}^{-1}\otimes I -
  \mathbf{N}_1^{\tran}\LL_0^{-1} \mathbf{M}_1 - \mathcal{N}_1 {\mathcal{M}_2}^{\tran}
  {\left(I\otimes {D(\vvec)}^{-1} - \mathbf{N}_2^{\tran}\LL_0^{-1} \mathbf{M}_2\right)}^{-1}\mathcal N_2 {\mathcal{M}_1}^{\tran},
\]
and, since $\mathcal{N}_1 {\mathcal{M}_2}^{\tran}{\left(I\otimes
    {D(\vvec)}^{-1} - \mathbf{N}_2^{\tran}\LL_0^{-1} \mathbf{M}_2\right)}^{-1}\mathcal{N}_2
{\mathcal{M}_1}^{\tran}$ has rank $\widebar{p}$, we can employ the Sherman-Morrison-Woodbury formula to efficiently solve linear systems with $\mathcal{S}(\vvec)$. We, thus, have
{\footnotesize
  \[
    {\mathcal{P}(\vvec)}^{-1}
    \begin{bmatrix}
      v_1\\ v_2\\
    \end{bmatrix}=
    \begin{bmatrix}
      {\mathcal{S}(\vvec)}^{-1}\left(v_1-\mathcal{N}_1\mathcal{M}_2^{\tran}{\left(I\otimes {D(\vvec)}^{-1} - \mathbf{N}_2^{\tran}\LL_0^{-1} \mathbf{M}_2\right)}^{-1}v_2\right)\\
      {\left(I\otimes {D(\vvec)}^{-1} - \mathbf{N}_2^{\tran}\LL_0^{-1} \mathbf{M}_2\right)}^{-1}\left(\mathcal{N}_2\mathcal{M}_1^{\tran}{\mathcal{S}(\vvec)}^{-1}\left(\mathcal{N}_1\mathcal{M}_2^{\tran}{\left(I\otimes {D(\vvec)}^{-1} - \mathbf{N}_2^{\tran}\LL_0^{-1} \mathbf{M}_2\right)}^{-1}v_2-v_1\right)+v_2
      \right)
    \end{bmatrix}.
  \]
}

Once the linear system in~\eqref{eq:SMW_linearsystem} is solved and the vector $w(\vvec)\in\CC^{2nk}$ is computed, we proceed with the remaining operations in~\eqref{eq:SMW_secondterm}. We can write
\begin{align*}
  \left(Q_0\otimes Q_0\right)
  \LL_0^{-1} \mathbf{M}w(\vvec)=&  \left(Q_0\otimes Q_0\right)
                                  \LL_0^{-1} \left[Q_0^{-1}B_{l} \otimes I, I\otimes Q_0^{-1}B_{l}\right]
                                  \begin{bmatrix}
                                    w_1(\vvec)
                                    \\
                                    w_2(\vvec)
                                    \\
                                  \end{bmatrix}\\
  =& \left(Q_0\otimes Q_0\right)
     \LL_0^{-1}\left(\vecop\left(W_1(\vvec)B_{l}^{\tran}Q_0^{-\tran}+
     Q_0^{-1}B_{l} W_2(\vvec)\right)\right)\\
  =& \vecop\left(Q_0 \left(\cL\mathop{\circ}\left(W_1(\vvec)B_{l}^{\tran}Q_0^{-\tran}+
     Q_0^{-1}B_{l} W_2(\vvec)\right)\right)  Q_0^{\tran}\right)=\vecop\left(W(\vvec) \right),
\end{align*}
where $W_1(\vvec)\in\CC^{n\times k}$ and $W_2(\vvec)\in\CC^{k\times n}$ are such that $\vecop(W_1(\vvec))=w_1(\vvec)$ and $\vecop(W_2(\vvec))=w_2(\vvec)$.

The solution $X(\vvec)$ to~\eqref{Ljap jdba 1}, in case of moderate $n$, can thus be computed as follows,
\begin{align}
  X(\vvec)=&X_0+X_{\delta}(\vvec)=X_0+Z(\vvec)+W(\vvec)\notag\\
  =& X_0+
     \sum_{i=1}^{k}v_{i}^{(\ell)}Z_{i}+Q_0 \left(\cL\mathop{\circ}\left(W_1(\vvec)B_{l}^{\tran}Q_0^{-\tran}+
     Q_0^{-1}B_{l} W_2(\vvec)\right)\right)  Q_0^{\tran},\label{eq:Xsmalllastline}
\end{align}
so that
\[
  f(X(\vvec))=f(X_0)+
  \sum_{i=1}^{k}v_{i}^{(\ell)}f(Z_{i})+f(Q_0 \left(\cL\mathop{\circ}\left(W_1(\vvec)B_{l}^{\tran}Q_0^{-\tran}+
      Q_0^{-1}B_{l} W_2(\vvec)\right)\right)  Q_0^{\tran}).
\]
In Table~\ref{tab1} we summarize the operations that can be performed offline and online. Notice that the $\mathcal{O}(n^3)$ flops needed to compute $W(\vvec)$ come from the final matrix-matrix multiplications by $Q_0$ and $Q_0^{\tran}$ in~\eqref{eq:Xsmalllastline}. These operations can be avoided for $f=\trace$ and $Q_0$ orthogonal, thus reducing the cost of computing $W(\vvec)$ to $\mathcal{O}(n^2)$ flops.

\begin{table}[t!]
  \centering
  \begin{tabular}{cc|cc}
    \multicolumn{2}{c}{\textbf{Offline}} & \multicolumn{2}{c}{\textbf{Online}}\\
    Operation & Flops & Operation & Flops\\
    Compute $X_0$ and $P$ & $\mathcal{O}(n^3)$ & Compute $Z(\vvec)$ & $\mathcal{O}(kn^2)$ \\
    Eig   $A_0=Q_0\Lambda_0Q_0^{-1}$ & $\mathcal{O}(n^3)$ & Solve~\eqref{eq:SMW_linearsystem}& $\mathcal{O}(mk^2n^2)$\\
    Assemble $\mathbf{N}^{\tran}\LL_0^{-1}\mathbf{M}$ & $\mathcal{O}(k^2n^2)$& Compute $W(\vvec)$&$\mathcal{O}(n^3)$\\
    TSVD $\mathcal{N}_1^{\tran}\mathcal{M}_2$, $\mathcal{N}_2^{\tran}\mathcal{M}_1$ & $\mathcal{O}(n^3)$ &  & \\
    Compute $\widetilde Z_{i}$, $ Z_{i}$, $i=1,\ldots,k$ & $\mathcal{O}(n^3)$ && \\
  \end{tabular}
  \caption{Offline and online operations involved in the computation of the solution $X(\vvec)$ to~\eqref{Ljap jdba 1} for a moderate $n$. $m$ denotes the number of iterations needed by GCRO-DR to converge. Depending on the properties of $f$, the cost of computing $W(\vvec)$ reduces to $\mathcal{O}(n^2)$ flops.}\label{tab1}
\end{table}

\REV{We conclude this section by recalling the reader that $w(\vvec)$ has been computed by GCRO-DR, an iterative method that has been run up to a certain tolerance on the relative residual norm associated to~\eqref{eq:SMW_linearsystem}. Therefore, we can think of $w(\vvec)$ as being of the form $w(\vvec)=w_*(\vvec)+\ell(\vvec)$, where $w_*(\vvec)$ is the exact solution to~\eqref{eq:SMW_linearsystem} whereas $\ell(\vvec)$ is the error vector coming from the computation of $w(\vvec)$. The magnitude of $\ell(\vvec)$ plays an important role in the overall accuracy that can be attained by $X(\vvec)$ in~\eqref{eq:Xsmalllastline}. In particular, by writing $\ell(\vvec)=[\ell_1(\vvec);\ell_2(\vvec)]$, $\ell_1(\vvec),\ell_2(\vvec)\in\mathbb{C}^{nk}$, and $E_1(\vvec)\in\mathbb{C}^{n\times k}$, $\ell_1(\vvec)=\text{vec}(E_1(\vvec))$, $E_2(\vvec)\in\mathbb{C}^{k\times n}$, $\ell_2(\vvec)=\text{vec}(E_2(\vvec))$,
a direct computation shows that we can write
\[
  X(\vvec)=X_0+Z(\vvec)+W_*(\vvec)+E(\vvec),
  \]
where $E(\vvec)=Q_0 \left(\cL\mathop{\circ}\left(E_1(\vvec)B_{l}^{\tran}Q_0^{-\tran}+
     Q_0^{-1}B_{l} E_2(\vvec)^{\tran}\right)\right)  Q_0^{\tran}$. To bound $\|E(\vvec)\|_F$ it may be more convenient to look at $E(\vvec)$ as the solution to the following Lyappunov equation
     \[
     A_0E(\vvec)+E(\vvec)A_0^{\tran}=Q_0E_1(\vvec)B_{l}^{\tran}+
     B_{l} E_2(\vvec)^{\tran}Q_0^{\tran}.
     \]
     Indeed, for stable $A_0$, by defining $\alpha_0$ as the largest eigenvalue of the symmetric part of $A_0$, namely $\alpha_0=\lambda_{\max}((A_0+A_0^{\tran})/2)$, if $\alpha_0<0$, standard results on the norm of solutions to Lyapunov equations say that
     \[
     \|E(\vvec)\|_F\leq \frac{1}{2|\alpha_0|}\|Q_0E_1(\vvec)B_{l}^{\tran}+
     B_{l} E_2(\vvec)^{\tran}Q_0^{\tran}\|_F;     \]
     see, e.g.,~\cite{Ves2011}.}
\subsection{Large-scale setting: a projection framework}\label{Large-scale setting: a projection framework}
In this section, we address the numerical solution of problems of large dimensions. In particular, due to a too large value of $n$,  we assume that many of the operations described in the previous section, e.g., the computation of the eigenvalue decomposition of $A_0$, are not affordable. On the other hand, we assume \REV{that it is possible, and efficient, to solve linear systems with $A_0$.}
\begin{Ass}\label{ass:3}
  \REV{Solving linear systems with $A_0$ and possibly multiple right-hand sides is possible and efficient}.
\end{Ass}

\REV{In all our numerical results, we computed (and stored) the LU factors of $A_0$.}

Taking inspiration from well-established projection methods for large-scale Lyapunov equations, we design a novel scheme for~\eqref{eq:Lyap_deltaX}. As before, the main goal is to reuse the information we already have at hand, as much as possible, every time~\eqref{eq:Lyap_deltaX} has to be solved for a new $\vvec$.

We employ the extended Krylov subspace method presented in~\cite{Sim07} \REV{and further studied in~\cite{KniSim2011}.} In this section we illustrate how to fully exploit its properties for our purposes.

In the case of a parameter independent Lyapunov equation of the form~\eqref{eq:seedLE} with \(\REV{Q}=PP^{\tran}\),
the extended Krylov subspace
\begin{equation}\label{eq:EK_def}
  \begin{aligned}
    \textbf{EK}_{m}^\square(A_0,P)&=\text{Range}([P,A_0^{-1}P,A_0P,A_0^{-2}P,\ldots,A_0^{m-1}P,A_0^{-m}P])\\
    &=\textbf{K}_{m}^\square(A_0,P)+\textbf{K}_{m}^\square(A_0^{-1},A_0^{-1}P),
  \end{aligned}
\end{equation}
can be employed as approximation space. In~\eqref{eq:EK_def}, $\textbf{K}_{m}^\square(A_0,P)$ denotes the standard, polynomial (block) Krylov subspace generated by $A_0$ and $P$, namely $\textbf{K}_{m}^\square(A_0,P)=\text{Range}([P,A_0P,\ldots,A_0^{m-1}P])$.

If $V_{m}\in\RR^{n\times 2mk}$, $k=\text{rank}(P)$, has orthonormal columns\footnote{\REV{Hereafter, we assume $V_m$ to have full rank. If this is not the case, standard deflation strategies can be adopted; see, e.g.,~\cite[Section 7.1]{Birk2015}~\cite{Gut2007}.}} and it is such that $\text{Range}(V_{m})=\textbf{EK}_{m}^\square(A_0,P)$, the extended Krylov subspace method computes an approximate solution of the form $X_{m}=V_{m}Y_{m}V_{m}^{\tran}$. The square matrix $Y_{m}\in{\RR}^{2mk\times 2mk}$ is usually computed by imposing a Galerkin condition on the residual matrix $R_{m}=A_0X_{m}+X_{m}A_0^{\tran}+PP^{\tran}$, namely $V_{m}^{\tran}R_{m}V_{m}=0$. It is easy to show that such a condition is equivalent to computing $Y_{m}$ by solving a \emph{reduced} Lyapunov equation.
\REV{With $Y_m$ at hand, an accuracy measure, either the residual norm or the backward error, is computed to assess the quality of the current solution. }
 Whenever this value is sufficiently small, we stop the process, otherwise the space is expanded.

A very similar scheme could also be employed for~\eqref{eq:Lyap_deltaX}. However, both the right-hand side and the coefficient matrix in~\eqref{eq:Lyap_deltaX} depend on $\vvec$ so that, at a first glance, a new extended Krylov subspace has to be computed for each $\vvec$. This would be too expensive from a computational point of view.

We show that only one extended Krylov subspace can be constructed and employed for solving all the necessary Lyapunov equations~\eqref{eq:Lyap_deltaX} regardless of the change in $\vvec$. In particular, we suggest the use of $\textbf{EK}_{m}^\square(A_0,P)$ as approximation space, where $P$ is defined as in~\eqref{eq:rhs_newexpression}. This selection is motivated by the following reasoning.

As reported in~\eqref{eq:rhs_newexpression}, the right-hand side of~\eqref{eq:Lyap_deltaX} can be written in factorized form as
\[
  P \left( \mathcal{I}_2\otimes D(\vvec)\right)
  P^{\tran},
  \qquad P=[X_0 B_{r},\,B_{l}]\in\mathbb{R}^{n\times 2k},
  \qquad \mathcal{I}_2=
  \begin{bmatrix}
    0&1\\
    1& 0\\
  \end{bmatrix}.
\]
Therefore, we can use only $P$ as the starting block for the construction of the approximation space for every $\vvec$. See, e.g.,~\cite[Section 3.2]{LanMS15}.

The argument to justify the use of $A_0$ in place of $A(\vvec)$ is a little more involved. We first observe that we can write
\[A(\vvec)P=(A_0-B_{l}D(\vvec)B_{r}^{\tran})P=A_0P-B_{l}(D(\vvec)B_{r}^{\tran}P),\]
and, since $\text{Range}(B_{l})\subseteq\text{Range}(P)$ by definition, we have $\text{Range}(A(\vvec)P)\subseteq\text{Range}(A_0P)+\text{Range}(P)$. Similarly, we can show that
\[\text{Range}({A(\vvec)}^{j}P)\subseteq\text{Range}(A_0^{j}P)+\text{Range}(P),\]
for all $j>0$. This implies that
$\textbf{K}_{m}^\square(A(\vvec),P)\subseteq \textbf{K}_{m}^\square(A_0,P)$. We now show that also the Krylov subspaces for the inverses are nested, i.e. $\textbf{K}_{m}^\square({A(\vvec)}^{-1},{A(\vvec)}^{-1}P)\subseteq \textbf{K}_{m}^\square(A_0^{-1},A_0^{-1}P)$, so that $\textbf{EK}_{m}^\square(A(\vvec),P)\subseteq \textbf{EK}_{m}^\square(A_0,P)$ and the use of the non-parametric space $\textbf{EK}_{m}^\square(A_0,P)$ is thus justified.

Thanks to the SMW formula, we have
\[
  {A(\vvec)}^{-1}P={(A_0-B_{l}D(\vvec)B_{r}^{\tran})}^{-1}P=A_0^{-1}P+A_0^{-1}B_{l}{({D(\vvec)}^{-1}-B_{r}^{\tran}A_0^{-1}B_{l})}^{-1}B_{r}^{\tran}A_0^{-1}P,
\]
and once again, since $\text{Range}(A_0^{-1}B_{l})\subseteq\text{Range}(A_0^{-1}P)$, it holds $\text{Range}({A(\vvec)}^{-1}P)\subseteq\text{Range}(A_0^{-1}P)$.
Similarly, $\text{Range}({A(\vvec)}^{-j}P)\subseteq\text{Range}(A_0^{-j}P)$ for all $j>0$. Therefore, we can use $\textbf{EK}_{m}^\square(A_0,P)$ for the computation of $X(\vvec)$.

If $V_{m}\in\RR^{n\times 4mk}$ has orthonormal columns, and it is such that $\text{Range}(V_{m})=\textbf{EK}_{m}^\square(A_0,P)$, we seek an approximate solution $X_{\delta,m}(\vvec)=V_{m}Y_{m}(\vvec)V_{m}^{\tran}$ to~\eqref{eq:Lyap_deltaX}. As in the nonparametric case, the matrix $Y_{m}(\vvec)\in\RR^{4mk\times 4mk}$ is computed by imposing a Galerkin condition on the residual, namely it is the solution of the projected equation
\begin{equation}\label{eq:param_projected}
  (T_{m} - B_{l,m}D(\vvec)B_{r,m}^{\tran}) Y(\vvec) + Y(\vvec)
  {(T_{m}-B_{l,m}D(\vvec)B_{r,m}^{\tran})}^{\tran} = E_1G(\mathcal{I}_2\otimes D(\vvec))G^{\tran}E_1^{\tran},
\end{equation}
where $T_{m}=V_{m}^{\tran}A_0V_{m}$ can be cheaply computed as suggested in~\cite{Sim07}, $B_{l,m}=V_{m}^{\tran}B_{l}$ and $B_{r,m}=V_{m}^{\tran}B_{r}$ can be updated on the fly by performing $4k$ inner products, whereas $G\in\RR^{4k\times 2k}$ is such that $P=V_1G$. Notice that $B_l$ can be exactly represented in $\textbf{EK}_{m}^\square(A_0,P)$, namely $B_l=V_{m}B_{l,m}$, as it is part of the initial block $P=[X_0B_r,B_l]$. This does not hold for $B_r$.

Since the Galerkin method is structure preserving,
the small-dimensional equation~\eqref{eq:param_projected} can be solved by the procedure presented in Section~\ref{Small-scale setting: recycling Krylov methods}. \REV{Notice that the well-posedness of equation~\eqref{eq:param_projected} is difficult to guarantee a priori without any further assumptions on $B_l$, $B_r$, and $D(\vvec)$.  Indeed, even in case of an $A_0$, and thus $T_m$, definite, the signature of $T_{m} - B_{l,m}D(\vvec)B_{r,m}^{\tran}$ cannot be easily predicted. However, in Section~\ref{subsec:application-damped_vibrational_systems}, we will show that the structural properties, inherent to some of the application settings we are interested in, ensure to work with a matrix $A(\vvec)$ that is stable for every $\vvec$. Even though this does not guarantee equation~\eqref{eq:param_projected} to be well-defined for any $m$, it has been noticed in~\cite[Section 5.2.1]{Simoncini2016} that it is very unlikely to get singular projected equations in this scenario.}

Once $Y_m(\vvec)$ has been computed, we need to decide whether the corresponding numerical solution $X_{\delta,m}(\vvec)=V_{m}Y_{m}(\vvec)V_{m}^{\tran}$ is sufficiently accurate.
\REV{To this end, we employ the backward error which can be computed as shown in the following proposition for the Lyapunov equation~\eqref{eq:Lyap_deltaX}}.
\begin{Prop}\label{Prop:backward_error}
  The backward error $\Delta_{m,\vvec}=\frac{\|R_{m}(\vvec)\|_F}{2\|A(\vvec)\|_F\|X_{\delta,m}(\vvec)\|_F+\|P(\mathcal{I}_2\otimes D(\vvec))P^{\tran}\|_F}$ provided by the approximate solution $X_{\delta,m}(\vvec)=V_{m}Y_{m}(\vvec)V_{m}^{\tran}$ to~\eqref{eq:Lyap_deltaX} is such that
  \begin{equation}\label{eq:bacward_error_comput}
    \Delta_{m,\vvec}=
    \frac{\sqrt{2}\|E_{m}^{\tran}\underline{T}_{m}Y_{m}(\vvec)\|_F}%
    {2\sqrt{{\|A_0\|}_F^2 + \vvec^{\tran}((B_{r}^{\tran}B_{l})\mathop{\circ}(B_{r}^{\tran}B_{l}))\vvec-2a^{\tran}\vvec}
      \cdot {\|Y_{m}(\vvec)\|}_F + {\|G(\mathcal{I}_2\otimes D(\vvec))G^{\tran}\|}_F},
  \end{equation}
  where $a=\mathop{\mathrm{diag}}(B_{r}^{\tran}A_0B_{l})$.
\end{Prop}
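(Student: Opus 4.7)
The plan is to compute the four Frobenius norms appearing in the definition of $\Delta_{m,\vvec}$ separately---the residual $\|R_m(\vvec)\|_F$, the coefficient norm $\|A(\vvec)\|_F$, the approximate-solution norm $\|X_{\delta,m}(\vvec)\|_F$, and the right-hand side norm $\|P(\mathcal{I}_2\otimes D(\vvec))P^{\tran}\|_F$---and then reassemble the ratio.

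For the numerator I would start from the extended Krylov Arnoldi-type relation $A_0 V_m = V_m T_m + \hat V_{m+1} T_{m+1,m} E_m^{\tran}$, where $\hat V_{m+1}$ collects the last $2k$ newly added orthonormal columns. The key structural observation is that $B_l$ is already captured by $V_m$: since $B_l$ is one of the two column blocks of the starting matrix $P=[X_0 B_r,B_l]$, one has $B_l = V_m B_{l,m}$. Combining these two facts yields
\begin{equation*}
A(\vvec)V_m = V_m\bigl(T_m - B_{l,m}D(\vvec)B_{r,m}^{\tran}\bigr) + \hat V_{m+1} T_{m+1,m} E_m^{\tran}.
\end{equation*}
Substituting this identity, together with $X_{\delta,m}(\vvec)=V_m Y_m(\vvec) V_m^{\tran}$ and $P=V_m E_1 G$, into $R_m(\vvec)=A(\vvec)X_{\delta,m}(\vvec)+X_{\delta,m}(\vvec)A(\vvec)^{\tran}-P(\mathcal{I}_2\otimes D(\vvec))P^{\tran}$, the $V_m(\cdot)V_m^{\tran}$ portion would collapse to zero thanks to the Galerkin condition~\eqref{eq:param_projected}, leaving only the contributions involving $\hat V_{m+1}$. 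Using $V_m^{\tran}\hat V_{m+1}=0$ to annihilate the cross trace gives $\|R_m(\vvec)\|_F^2=2\|E_m^{\tran}\underline{T}_m Y_m(\vvec)\|_F^2$.

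For the denominator, the two easy pieces come immediately: orthonormality of $V_m$ yields $\|X_{\delta,m}(\vvec)\|_F=\|Y_m(\vvec)\|_F$, and the factorization $P=V_1 G$ with $V_1^{\tran}V_1=I$ gives $\|P(\mathcal{I}_2\otimes D(\vvec))P^{\tran}\|_F=\|G(\mathcal{I}_2\otimes D(\vvec))G^{\tran}\|_F$. For $\|A(\vvec)\|_F$ I would expand
\begin{equation*}
\|A(\vvec)\|_F^2=\|A_0\|_F^2-2\,\trace\bigl(A_0^{\tran}B_l D(\vvec) B_r^{\tran}\bigr)+\|B_l D(\vvec) B_r^{\tran}\|_F^2,
\end{equation*}
reducing the cross term by cyclic permutation of the trace to $2a^{\tran}\vvec$ with $a_i=(B_r^{\tran}A_0 B_l)_{ii}$, and evaluating the quadratic term entrywise via $\|B_l D(\vvec) B_r^{\tran}\|_F^2=\sum_{i,j}v_i v_j (B_r^{\tran}B_l)_{ij}(B_r^{\tran}B_l)_{ij}=\vvec^{\tran}\bigl((B_r^{\tran}B_l)\mathop{\circ}(B_r^{\tran}B_l)\bigr)\vvec$. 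Plugging the four pieces into the definition of $\Delta_{m,\vvec}$ yields exactly~\eqref{eq:bacward_error_comput}.

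The hard part will be the residual derivation. The dual role played by $B_l$---as a summand of the coefficient matrix $A(\vvec)$ and at the same time as a block of the starting matrix $P$---is precisely what ensures that the unparametrized space $\textbf{EK}_m^{\square}(A_0,P)$ absorbs the $\vvec$-dependent low-rank correction into $T_m-B_{l,m}D(\vvec)B_{r,m}^{\tran}$ without generating spurious terms lying outside $\mathrm{Range}(V_m)+\mathrm{Range}(\hat V_{m+1})$. Once this structural identity is in place, the remaining steps are routine Frobenius-norm manipulations.
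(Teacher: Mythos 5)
Your proposal is correct and follows essentially the same route as the paper's proof: exploit $B_l=V_mB_{l,m}$ and $P=V_mE_1G$ together with the Arnoldi relation so that the Galerkin condition annihilates the $V_m(\cdot)V_m^{\tran}$ part of the residual, then evaluate the remaining norms ($\|X_{\delta,m}\|_F=\|Y_m\|_F$, $\|P(\mathcal{I}_2\otimes D(\vvec))P^{\tran}\|_F=\|G(\mathcal{I}_2\otimes D(\vvec))G^{\tran}\|_F$, and the trace/Hadamard expansion of $\|A(\vvec)\|_F^2$). The only cosmetic slip is that the new basis block $\mathcal{V}_{m+1}$ in the extended Krylov method has $4k$ columns rather than $2k$, which does not affect the argument.
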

\begin{proof}
  The expression for the residual matrix
  \[ R_{m}=A(\vvec)X_{m}(\vvec) + X_{m}(\vvec){A(\vvec)}^{\tran} - P (\mathcal{I}_2\otimes D(\vvec))P^{\tran},\] directly comes from the Arnoldi relation
  \[
    A_{0}V_{m}=V_{m}T_{m} + \mathcal{V}_{m+1}E_{m}^{\tran} \underline{T}_{m},
  \]
  where $\mathcal{V}_{m+1} \in \RR^{n\times 4k}$ is the $(m+1)$-th basis block, namely $V_{m+1}=[V_{m},\mathcal{V}_{m+1}]$. We can write
  \begin{align*}
    R_{m}(\vvec)=&A(\vvec)X_{\delta,m}(\vvec)+X_{\delta,m}(\vvec){A(\vvec)}^{\tran}-P (\mathcal{I}_2\otimes D(\vvec))P^{\tran}\\
    =&A(\vvec)V_{m}Y_{m}(\vvec)V_{m}^{\tran}+V_{m}Y_{m}(\vvec)V_{m}^{\tran}{A(\vvec)}^{\tran}-P
       (\mathcal{I}_2\otimes D(\vvec))P^{\tran}\\
    =&(A_0-B_{l} D(\vvec)B_{r}
       ^{\tran})V_{m}Y_{m}(\vvec)V_{m}^{\tran}+V_{m}Y_{m}(\vvec)V_{m}^{\tran}(A_{0}-B_{l}
       D(\vvec)B_{r}^{\tran})-P (\mathcal{I}_2\otimes
       D(\vvec))P^{\tran}.
  \end{align*}
  Recalling that $B_{l}=V_{m}B_{l,m}$, $P=V_{m}E_{1}G$, and plugging the Arnoldi relation above, we get
  \begin{align*}
    R_{m}(\vvec)=&V_{m}((T_{m}-B_{l,m}D(\vvec)B_{r,m}^{\tran})Y(\vvec)+Y(\vvec){(T_{m}-B_{l,m}D(\vvec)B_{r,m}^{\tran})}^{\tran}\\
                 &-E_1G(\mathcal{I}_2\otimes
                   D(\vvec))G^{\tran}E_1^{\tran})V_{m}^{\tran} +
                   \mathcal{V}_{m+1}E_{m}^{\tran}\underline{T}_{m}Y_{m}(\vvec)V_{m}^{\tran}
                   + V_{m}Y_{m}(\vvec)
                   \underline{T}_{m}^{\tran}E_{m}\mathcal{V}_{m+1}^{\tran}\\
    =&\mathcal{V}_{m+1}E_{m}^{\tran}\underline{T}_{m}Y_{m}(\vvec)V_{m}^{\tran}+V_{m}Y_{m}(\vvec)
       \underline{T}_{m}^{\tran}E_{m}\mathcal{V}_{m+1}^{\tran}.
  \end{align*}
  Therefore, thanks to the orthogonality of $V_{m}$ and $\mathcal{V}_{m+1}$, we have
  \[{\|R_{m}(\vvec)\|}_F^2=2{\|E_{m}^{\tran}\underline{T}_{m}Y_{m}(\vvec)\|}_F^2.\]
  Similarly, since $P=V_1G$,
  \[{\|P(\mathcal{I}_2\otimes D(\vvec))P^{\tran}\|}_F = {\|G(\mathcal{I}_2\otimes D(\vvec))G^{\tran}\|}_F.\]
  We now focus on $\|A(\vvec)\|_F$. By exploiting the cyclic property of the trace operator, and recalling that $D(\vvec)$ is diagonal, we have
  \begin{align*}
    {\|A(\vvec)\|}^2_F
    = &{\|A_0-B_{l} D(\vvec)B_{r}^{\tran}\|}_F^2 =
        {\|A_0\|_F^2+\|B_{l} D(\vvec)B_{r}^{\tran}\|}_F^2-2\trace{(B_{l} D(\vvec)B_{r}^{\tran}A_0)}\\
    =&{\|A_0\|}_F^2+\trace{(B_{l} D(\vvec)B_{r}^{\tran}B_{l} D(\vvec)B_{r}^{\tran})}-2\trace{( D(\vvec)B_{r}^{\tran}A_0B_{l})}\\
    =&{\|A_0\|}_F^2+\trace{( D(\vvec)(B_{r}^{\tran}B_{l}) D(\vvec)(B_{r}^{\tran}B_{l}))}-2a^{\tran}\vvec\\
    =&{\|A_0\|}_F^2+\vvec^{\tran}((B_{r}^{\tran}B_{l})\mathop{\circ}(B_{r}^{\tran}B_{l}))\vvec-2a^{\tran}\vvec,
  \end{align*}
  where
  $a=\mathop{\mathrm{diag}}(B_{r}^{\tran}A_0B_{l})$.

\end{proof}
Proposition~\ref{Prop:backward_error} shows that even though the computation of $\Delta_{m,\vvec}$ depends on the current parameter $\vvec$, its evaluation can be carried out at low cost.
In fact, quantities involving the full problem dimension \(n\), like $\|A_0\|_F$, $(B_{r}^{\tran}B_{l})\mathop{\circ}(B_{r}^{\tran}B_{l})$, and $a$, can be computed in the offline stage. On the other hand, the computational effort for the remaining parameter-dependent terms is only quadratic in the dimension \(k\) of the parameter space. Thus, as long as \(k^{2}\) stays well below \(n\), this procedure is cheap.
If the accuracy provided by $Y_{m}(\vvec)$ is not adequate, we expand the space.

With the corresponding $Y_{m}(\vvec)$ at hand, we can compute $f(X_{\delta,m}(\vvec))=f(V_{m}Y_{m}(\vvec)V_{m}^{\tran})$. Notice that, depending on the nature of $f$, the structure of $X_{\delta,m}(\vvec)$ can be further exploited. For instance, we can cheaply evaluate $\trace{(X_{\delta,m}(\vvec))}$, as
\[
  \trace{(X_{\delta,m}(\vvec))} = \trace{(V_{m}Y_{m}(\vvec)V_{m}^{\tran})}=\trace{(Y_{m}(\vvec))},
\]
thanks again to the cyclic property of the trace and the orthogonality of $V_{m}$. Thus, the basis $V_{m}$ is not necessary to compute \(\trace{(X_{\delta,m}(\vvec))}\).

The overall procedure is summarized in Algorithm~\ref{EKSM_opt}. Note that Algorithm~\ref{EKSM_opt} can be easily modified to be used in optimization procedures, having instead of the set of parameter vectors $\mathsf{V}$ a starting vector $\vvec_0$ and using Algorithm~\ref{EKSM_opt} to calculate
$f(V_{m}Y_{m}(\vvec_\ell)V_{m}^{\tran})$ for each new $\vvec_\ell$ in the iterative optimization scheme while keeping, and possibly expanding, the computed subspace.
\begin{algorithm}[t]
  \DontPrintSemicolon
  \SetKwInOut{Input}{input}\SetKwInOut{Output}{output}
  \Input{$A_0\in\RR^{n\times n}$, $\mathsf{V}$: the set of parameter vectors,  $(B_{r}^{\tran}B_{l})\mathop{\circ}(B_{r}^{\tran}B_{l})\in\RR^{k\times k}$, $a=\mathop{\mathrm{diag}}(B_{r}^{\tran}A_0B_{l})\in\RR^{k}$, $P\in\RR^{n\times 2k}$, $\|A_0\|_F$
    \newline max.\ iteration count $m_{\max}$, backward error bound $\epsilon>0$.}
  \Output{$f{(X_{\delta,m}(\vvec))}$ for all $\vvec \in \mathsf{V}$.}
  \BlankLine
  Compute a skinny QR factorization of $[P,A_0^{-1}P]=V_1[\gamma,\theta]$\;
  Set $m=1$, $\mathsf{V}_0 = \mathsf{V} $\;
  \While{ $m< m_{\max}$}{
    Compute next basis block $\mathcal{V}_{m+1}$, set $V_{m+1}=[V_{m},\mathcal{V}_{m+1}]$, and update $T_{m}=V_{m}^{\tran}A_0V_{m}$ as in~\cite{Sim07}\;
    Update $B_{l,m}=V_{m}^{\tran}B_{l}$, $B_{r,m}=V_{m}^{\tran}B_{r}$\;
    Compute the offline quantities in Table~\ref{tab1} for solving equation~\eqref{eq:param_projected}\;
    \For{all $\vvec \in \mathsf{V}_0$}{
      Perform the online steps in Table~\ref{tab1} and compute  $Y_{m}(\vvec)$\;
      Compute $\Delta_{m,\vvec}$ as in~\eqref{eq:bacward_error_comput}\;
      \If{$\Delta_{m,\vvec}\leq\epsilon$}{
        Compute $f(V_{m}Y_{m}(\vvec)V_{m}^{\tran})$\;
        $\mathsf{V}_0 = \mathsf{V} \setminus \left\{ \vvec \right\}$
      }
      \Else{\textbf{break} and go to line~\ref{alg:last_line}
      }
    }
    Set $m=m+1$\label{alg:last_line}\;
  }
  \caption{Extended Krylov subspace method for calculating trace of a structured, parameter dependent Lyapunov equation.}%
  \label{EKSM_opt}
\end{algorithm}

We recall that the solution of the linear systems with $A_0$ needed in the construction of the basis of $\mathbf{EK}_{m}^\square(A_{0},P)$ can be efficiently carried out thanks to Assumption~\ref{ass:3}.

\REV{As outlined above, the construction of the extended Krylov method requires adding $4k$ vectors per iteration to the current basis even though the initial block $P$ has rank $2k$. This could lead to the allocation of a large basis if many iterations $m$ need to be performed. To avoid the use of an excessive amount of memory, one could combine our methodology with the compress-and-restart paradigm presented in~\cite{KreLMetal21} for the polynomial Krylov subspace method.}

\section{Applications}\label{Applications} 
In this section we illustrate two important problem settings where~\eqref{Ljap jdba 1} needs to be solved several times, for many parameters $\vvec$. The large number of equations in these scenarios makes our novel solvers very appealing, especially if compared to state-of-the-art procedures, which are not able to fully capitalize on the structure of the problem; see also Section~\ref{Numerical examples}.

\subsection{Damped vibrational systems}
\label{subsec:application-damped_vibrational_systems}

We consider linear vibrational systems described by
\begin{equation}\label{MDK}
  \begin{gathered}
    M\ddot{x}+C(\vvec)\dot{x}+Kx=0,\\
    x(0) = x_0, \quad \dot{x}(0) = \dot{x}_0,
  \end{gathered}
\end{equation}
where $M$ denotes the mass matrix, $C(\vvec)$ denotes the parameter dependent damping matrix, $K$ denotes the stiffness matrix, and $x_0$ and $\dot{x}_0$ are initial data. We assume that $M$ and $K$ are real, symmetric positive definite matrices of order $m$. The damping matrix is defined as $C(\vvec)=C_{\rm int}+C_{\rm ext} (\vvec)$.

We assume that
\begin{equation}\label{Cext}
  C_{\rm ext}(\vvec)=B D(\vvec)B ^{\tran},
\end{equation}
where the matrix $B\in \RR^{m\times k}$ describes the dampers' geometry and the matrix $D(\vvec)=\diag{(v_1,v_2,\ldots,v_k)}$ contains the damping viscosities $v_{i} > 0$, for $i=1\ldots,k$. The viscosities will be  encoded in the parameter vector
$\vvec=\begin{bmatrix}v_1, v_2, \ldots,v_k\end{bmatrix}^{\tran}\in  \RR^{k}_{> 0}$.

The internal damping $C_{\rm int}$ can be modeled in different ways.  It is usually modeled as a Rayleigh damping matrix, which means that
\begin{equation}\label{Cprop}
  C_{\rm int}=\alpha M + \beta K,\quad \mbox{with}\quad  \alpha, \beta\geq 0, \; \alpha^2 + \beta^2 >0,
\end{equation}
or as a small multiple of the critical damping, that is,
\begin{equation}\label{Ccrit}
  C_{\rm int} =\alpha M^{1/2}\sqrt{M^{-1/2}KM^{-1/2}}M^{1/2}, \quad \text{where}\quad \alpha> 0.
\end{equation}
See, e.g.,~\cite{KTTmodaldamping12, Ves2011, BB80} for further details.

Vibrations arise in a wide range of systems, such as mechanical, electrical or civil engineering structures. Vibrations are a mostly unwanted phenomenon in vibrational structures, since they can lead to many undesired effects such as creation of noise, oscillatory loads or waste of energy that may produce damaging effects on the considered structures. Therefore, in order to attenuate or minimize unwanted vibrations, an important problem is to determine the damping matrix in such a way that the vibrations of the system are as small as possible. This is usually achieved through optimization of the external damping matrix \(C_{\rm ext} (\vvec)\). Within this framework, we will focus on the optimization of the damping parameter vector $\vvec$ defining \(C_{\rm ext} (\vvec)\) as in~\eqref{Cext}.  While damping optimization is a widely studied topic, there are still many challenging tasks that require efficient approaches. There is a vast literature in this field of research. For further details, we list only a few references that address the minimization of dangerous vibrations with different applications~\cite{beards1996structural,  genta2009vibration, Ves2011, ITakewaki2009,  MullerSchiehlen85, du2016modeling, ZuoNay05}.

The problem of vibrations minimization requires a proper optimization criterion. A whole class of criteria are based on eigenvalues; see, e.g.~\cite{FreitasLancaster99, MoroEgana16, WJSH2018, GMQSW2016, Cox:98, TissMeer01}. Another important criterion is based on the total average energy of the system.  This has been intensely considered in the last two decades; see, e.g.,~\cite{Ves2011, BRAB98, TRUHVES09, TrTomPuv17, Cox:04, Nakic13}.
Since our approach is also based on the total average energy, in the following we provide more details and set the stage for the application of our framework.

Several approaches trying to fully exploit the structure and accelerate the optimization process have been proposed in the literature about damping systems. In more details, in~\cite{VES89, VES90, TTVcasestudy11} the authors considered approaches that allowed derivation of explicit formulas for the total average energy, but they are adequate only for certain case studies. Furthermore, in~\cite{BennerTomljTruh10, morBenTT13}  the authors employed dimension reduction techniques in order to obtain efficient approaches for the calculation of the total average energy. However, these approaches require specific system configurations and they cannot be applied efficiently for general system matrices.

If we write~\eqref{MDK} as a first-order ODE in phase space $\dot{y} = A(\vvec) y$, the solution is given by $y (t) = \mathrm{e}^{A(\vvec)t}y_0$, where $y_0$ denotes the vector of initial data. The average total energy of the system is given by
\[
  \int_{\lVert y_0 \rVert  = 1} \int_0^{\infty} {y(t)}^{\tran} y(t) \,\mathrm{d}t \,\mathrm{d}\sigma
  = \int_{\lVert y_0 \rVert  = 1} \int_0^{\infty} y_0^{\tran} \mathrm{e}^{{A(\vvec)}^{\tran}t}\mathrm{e}^{A(\vvec)t}y_0 \,\mathrm{d}t \,\mathrm{d}\sigma,
\]
where $\sigma$ is a given non-negative measure on the unit sphere. From~\cite[Proposition 21.1]{Ves2011} it follows that one can calculate the average total energy as $\trace (X(\vvec))$, where $X(\vvec)$ solves the Lyapunov equation
\begin{equation}\label{average total energy}
  A(\vvec) X(\vvec) + X(\vvec){A(\vvec)}^{\tran} = -\REV{Q}.
\end{equation}
Here $\REV{Q}$ is the unique positive semidefinite matrix determined by $\sigma$.
We will show that $A(\vvec)$ has the form~\eqref{eq:formA} and that Assumptions~\ref{ass:2} is satisfied for both types of internal damping mentioned above and for typical choices of $\sigma$.

Using the linearization $y_1 = x$, $y_2 = \dot{x}$, the differential equation (\ref{MDK}) in phase
space can be written as
\begin{equation}\label{MatrixAA}
  \dot{y}=A(\vvec) y,\qquad \mbox{where}\qquad  A(\vvec)=\left[%
    \begin{array}{cc}
      0 & I \\
      -M^{-1}K & - M^{-1}C(\vvec) \\
    \end{array}%
  \right].
\end{equation}
In the case of internal Rayleigh damping~\eqref{Cprop}, it is easy to see that $A(\vvec)$ has the form~\eqref{eq:formA} with
\[
  A_0 = \begin{bmatrix}
    0 & I \\ - M^{-1} K & - \alpha I - \beta M^{-1}K
  \end{bmatrix}, \quad
  B_l = \begin{bmatrix}
    0 \\ M^{-1}B
  \end{bmatrix},  \quad
  B_r = \begin{bmatrix}
    0 \\ B
  \end{bmatrix}.
\]
It can be shown that $A(\vvec)$ is a Hurwitz matrix (the eigenvalues
of $A$ are in the open left half of the complex plane) if $\alpha^2 + \beta^2 > 0$. Moreover, by using the appropriate permutation matrix, $A_0$ can be written as a block diagonal matrix with $2\times 2$ blocks on its diagonal so that its eigenvalues can be directly calculated.
Assume $Z = \mathop{\mathrm{diag}}(K^{-1},M^{-1})$, which corresponds to the choice of the surface measure  $\sigma$ generated by the energy norm $\lVert y \rVert = \frac{1}{\sqrt{2}}\sqrt{ y_1^{\tran} K y_1 +  y_2^{\tran} M y_2} $ on $\RR^{2m}$.
Then, one can calculate the corresponding $X_0$ as
\begin{equation}
  X_0 =  \begin{bmatrix}
    {(\alpha K + \beta K M^{-1}K)}^{-1} + \alpha K^{-1}M^{-1}K^{-1} + \beta  K^{-1} & -\frac{1}{2}K^{-1} \\
    -\frac{1}{2}K^{-1} & {(\alpha K + \beta K M^{-1}K)}^{-1}
  \end{bmatrix},
\end{equation}
hence Assumption~\ref{ass:2} is satisfied.
It can be shown that the eigenvalues of $A_0$ are given by
\[
  \frac{1}{2} \left(- \alpha - \beta \lambda_{i} \pm \sqrt{{(\alpha + \beta \lambda_{i})}^2 - 4 \lambda_{i}}\right),
\]
where $\lambda_{i} > 0$, $i=1,\ldots,m$, are the eigenvalues of the matrix pair $(K,M)$ and that the corresponding eigenvectors can be constructed from the eigenvectors of the pair $(K,M)$. Hence, Assumption~\ref{ass:3} is satisfied.

In the case of internal damping of the form~\eqref{Ccrit}, a different kind of linearization is more convenient.
From the assumptions on $M$ and $K$ it follows that there exists a matrix
$\Phi$ that simultaneously diagonalizes
$M$ and $K$, i.e.,
\begin{equation}\label{MatrixPhi}
  \Phi^{\tran} K \Phi = \Omega^2
  \quad \text{and}
  \quad \Phi^{\tran} M \Phi = I,
\end{equation}
where $\Omega = \diag (\omega_1 , \ldots, \omega_{m}  )$ contains the square roots of the eigenvalues of $(K,M)$, which are eigenfrequencies of the corresponding undamped ($C(\vvec)=0$) vibrational system. We assume that they are ordered in ascending order,  $0 < \omega_1 \leq\omega_2\leq\cdots\leq \omega_{m} $.
It holds that the matrix $\Phi$ diagonalizes $C_{\rm int}$ as well, that is,
\begin{equation*}
  \Phi^{\tran} C_{\rm int} \Phi = \alpha \Omega,
\end{equation*}
for more details see, e.g.,~\cite{Ves2011}.

Using the linearization $y_1 = L_K^{\tran}x$, $y_2 = L_M^{\tran}\dot{x}$, where $L_K$, $L_M$ are the Cholesky  factors of $K$ and $M$, respectively, the matrix $A(\vvec)$ can be written as
\begin{equation}\label{MatrixA}
  A(\vvec)=\left[%
    \begin{array}{cc}
      0 & \Omega \\
      -\Omega & -\Phi^{\tran} C(\vvec) \Phi \\
    \end{array}%
  \right],
\end{equation}

where $\Omega$ and $\Phi$ are given by~\eqref{MatrixPhi}.
\REV{It is important to notice that the matrix $A(\vvec)$ from (\ref{MatrixA}) is Hurwitz if $\alpha >0$; see, e.g.,~\cite{Ves2011}.}

Since in this case the matrix $A_0$ is given by
\[
  A_0 = \begin{bmatrix}
    0 & \Omega \\ - \Omega & - \alpha \Omega
  \end{bmatrix},
\]
 all its eigenvalues of $A_0$ are non-real for $\alpha < 2$.

Let $n = 2m$. Typical choices for the matrix $\REV{Q}$ in this case are $\REV{Q}=\frac{1}{n}I$, which corresponds to the case when $\sigma$ is generated by the Lebesgue measure on $\RR^{n\times n}$, and, for $s< m$,
\begin{equation}\label{matrix Z}
  \REV{Q}=\frac{1}{2s}
  \begin{bmatrix}
    I_s & 0 & 0 & 0 \\
    0 & 0 & 0 & 0\\
    0 & 0 & I_s & 0 \\
    0 & 0 & 0 & 0
  \end{bmatrix}, \end{equation}
which corresponds to the case when $\sigma$ is generated by the Lebesgue measure on the subspace spanned by the vectors
${[x_{i}, 0]}^{\tran}$ and ${[0, x_{i}]}^{\tran}$, $i = 1,\ldots, s$, where $x_{i}$ are the eigenvectors of the first $s$ $\omega_{i}$'s and on the rest of $\RR^n$ it corresponds to the Dirac measure concentrated at zero.

For both choices of the matrix $\REV{Q}$ mentioned above, Assumption~\ref{ass:2} is satisfied. Indeed,
if $\REV{Q}=\frac{1}{n}I$, we have
\[
  X_0=\frac{1}{n}
  \begin{bmatrix}
    \frac{3 \alpha}{2}\Omega^{-1} & -\frac{1}{2}\Omega^{-1}\\
    -\frac{1}{2}\Omega^{-1} & \frac{1}{\alpha}\Omega^{-1}
  \end{bmatrix},
\]
and if $\REV{Q}$ is given by~\eqref{matrix Z}, then a direct calculation shows that
\[
  X_0 = \frac{1}{2s}
  \begin{bmatrix}
    \frac{3 \alpha}{2}\Omega_s^{-1} & 0 & -\frac{1}{2}\Omega_s^{-1} & 0\\
    0 & 0 & 0 & 0 \\
    -\frac{1}{2}\Omega_s^{-1} & 0 & \frac{1}{\alpha}\Omega_s^{-1} & 0 \\
    0 & 0 & 0 & 0
  \end{bmatrix},
\]
where $\Omega_s = \mathop{\mathrm{diag}}(\omega_1,\ldots,\omega_s)$.

\subsection{Multi-agent systems}
\label{subsec:multi-agent systems}

The second application concerns the analysis of parameter variation in output synchronization for heterogeneous multi-agent systems.  In a nutshell, multi-agent systems are a class of dynamical systems on networks, consisting of a number of dynamical systems (\emph{agents}) connected in a network, where the agents can exchange information with the neighboring agents through a given interaction \emph{protocol}, with the communication topology being specified through a (combinatorial) graph. Agent dynamics can be rather complex; see, e.g.,~\cite{Nojavanzadeh21}. Examples of systems that can be modelled as multi-agent systems are, e.g.,\ wireless sensor networks, power grids, and social networks. Common protocols are those for which the systems achieve consensus or output synchronization, and those for which the systems achieve desired formation or flocking state. The design and analyses of multi-agent systems have been a widely investigated field in recent years and for more details see, e.g.,~\cite{MeshabiEgerstedt2010,Yu2017,wang2021robust} and references therein.

The problem of output synchronization is how to design / control a system in such a way that the outputs of the agents converge to the same state. The general setting of output synchronization problems for heterogeneous multi-agent systems was considered, e.g., in~\cite{Kim11,Davodi14,Jiao21,Nojavanzadeh21}.

An important aspect in heterogeneous multi-agent systems is studying the impact that one agent has, or a subset of agents has, on the whole system. This will, of course, depend on the dynamics of these agents and their location in the communication graph. The goal of this subsection is to analyze this impact with high computational efficiency in the case of the output synchronization protocol. We will analyze how one or more agents influence the entire system by using the $H_2$-norm (see, e.g.,~\cite{zhou1998essentials}) of the corresponding multi-agent system as a performance measure. By using this information, the system designer can modify / control the heterogeneous multi-agent system in order to achieve a target behavior.

To understand why equations of the form~\eqref{Ljap jdba 1} arise in this setting, we first need to briefly introduce the bigger picture. Motivated by work from~\cite{Palunko21,RenBeard08,NakTolTomljPal22}, we consider $m$ agents with their dynamics described by
\begin{equation}
  \label{eq:MASsystem}
  \begin{aligned}
    \dot \xi_{i} &= A^{(i)} \xi_{i} + B^{(i)}u_{i} + \omega_{i},\\
    \zeta_{i}&=C^{(i)}\xi_{i}.
  \end{aligned}
\end{equation}
Here the function $\xi_{i}$ represents the state of the agent $i$, the function  $u_{i}$ represents the input of the agent $i$, the function $\zeta_{i}$ represents the output of the agent $i$, and the function $\omega_{i}$ represents the exogenous disturbance of the agent $i$, $i \in \{1, 2, \ldots, m \}$.
The matrices $A^{(i)}$, $B^{(i)}$ and $C^{(i)}$ are called state, input, and output matrices, of the agent  $i$, respectively. We assume that all matrices $A^{(i)}$ have the same size, and that the same holds for the matrices $B^{(i)}$ and $C^{(i)}$.
Let $G$ be the corresponding communication graph, which models the connections of the agents from~\eqref{eq:MASsystem}, with nodes $\left\{ 1,\ldots,m \right\}$. This means that $(i,j)$ is an edge in the graph $G$ if the agents $i$ and $j$ exchange information through the common protocol.
A protocol we are going to study is the following
\begin{equation}
  \label{eq:protocol}
  u_{i}(t) = - K^{(i)}  \sum_{j \in \mathfrak{N}_{i}}   (\zeta_{i}(t)-\zeta_{j}(t)), \quad i=1,\ldots, m.
\end{equation}
Here $\mathfrak{N}_{i}$ denotes the set of neighboring agents of the agent $i$ (described by $G$) and $K^{(i)}$ is a so-called gain matrix of appropriate dimension~\cite{RenBeard08}.

We define the stack vector $\xi=(\xi_1,\ldots,\xi_m)$. Let $n$ be the size of the matrices $A^{(i)}$, hence $nm$ is the size of the vector $\xi$.
As we want to treat scenarios in which the disturbances $\omega_i$ need not all be independent, let us suppose that among the exogenous disturbances $\{\omega_1,\ldots,\omega_m\}$ there are $k\in\{1,\ldots,m\}$ different ones $\{\omega_{i_1}, \ldots, \omega_{i_k}\}$ (e.g., the same ocean waves or wind gusts concurrently disturb several agents). Then $\begin{bmatrix}
  \omega_1 & \cdots & \omega_m
\end{bmatrix}^\top = H \begin{bmatrix}
  \omega_{i_1} & \cdots & \omega_{i_k}
\end{bmatrix}^\top$, where the matrix $H\in \mathbb{R}^{m\times k}$ is given by
\[
  H_{jl} = \begin{cases}
    1, & \omega_j = \omega_{i_l} \\
    0, & \text{otherwise}
  \end{cases},
  \quad j=1,\ldots, m,\;\; l=1,\ldots,k.
\]
With $\mathbf{E}=H\otimes I_n$ we denote the corresponding disturbance matrix. Let $\omega = (\omega_{i_1},\ldots,\omega_{i_k})$. With $\mathbf{C}$ we denote the overall output matrix $\mathbf{C}=\mathop{\mathrm{diag}}(C^{(1)},\ldots,C^{(m)})$.

The closed-loop dynamic equation of the system described by~\eqref{eq:MASsystem} and~\eqref{eq:protocol} can be represented as
\begin{equation}
  \label{eq:MASsystemCompactForm}
  \begin{aligned}
    \dot \xi  (t)& = \mathbf{A}\xi(t)+\mathbf{E}\omega(t), \\
    \zeta (t)& = \mathbf{C}\xi (t),
  \end{aligned}
\end{equation}
where
the system matrix $\mathbf{A}\in\mathbb{R}^{nm\times nm}$ in~\eqref{eq:MASsystemCompactForm} can be seen as a block matrix whose $(i,j)$-th block $\mathbf{A}_{ij}$ is given by
\begin{equation*}
  \mathbf{A}_{ij} = \delta_{i,j}A^{(i)} -\ell_{ij}B^{(i)}K^{(i)}C^{(j)}, \quad i,j\in \left\{ 1,\ldots,m \right\},
\end{equation*}
where  $\ell_{ij}$, $i,j=1,2,\ldots,m$, are the entries of the Laplacian matrix $L$ of the graph $G$,
see, e.g.,~\cite{Palunko21}. Note, however, that the model in~\cite{Palunko21} has delays, while our model has no delays.

By calculating the $H_2$-norm of the system~\eqref{eq:MASsystemCompactForm}, we are calculating the norm of the mapping $\omega\mapsto \zeta$, thus measuring how the disturbance $\omega$ is influencing the multi-agent system.
This boils down to the calculation of $\trace (\mathbf{E} \mathbf{E}^{\tran} X)$, where $X$ solves the Lyapunov equation $\mathbf{A}^{\tran} X + X\mathbf{A} = - \mathbf{C}^{\tran} \mathbf{C}$. If we were interested in measuring the influence of the disturbance to a part of the system, i.e., to a subset of agents, instead of the matrix $\mathbf{C}$ in~\eqref{eq:MASsystemCompactForm}, we would use the matrix $\mathbf{P}\mathbf{C}$, where $\mathbf{P}$ has the form $\mathbf{P} = P\otimes I$, with $P$ being an orthogonal projector to the subspace spanned by the canonical vectors corresponding to the subset of agents we are interested in.

We want to study the impact that a variation of dynamics in one or a subset of agents has on the $H_{2}$-norm of the resulting system.
To this end, we need to compute $\trace (\mathbf{E} \mathbf{E}^{\tran}
X(\vvec))$ where now $ X(\vvec)$ denotes the solution to a Lyapunov equation of
the form~\eqref{Ljap jdba 1} with $\REV{Q}=\mathbf{C}^{\tran} \mathbf{C}$. In this
framework, the low-rank modification $B_{l}D(\vvec){B_{r}}^{\tran}$ in the coefficient matrix~\eqref{eq:formA} is meant to take into account the variation in the dynamics of the agents of interest. In particular, $B_{l}$ and $B_{r}$ will encode the location of the agents to be modified whereas $D(\vvec)$ amounts to the parameter variation to be applied to those agents.

In the numerical examples in Section~\ref{Numerical examples} we choose symmetric matrices $A^{(i)}$ and $K^{(i)}$, and we will also take $B^{(i)} = (C^{(i)})^{\tran}$, $i=1,\ldots,m$. Hence, the matrix $\mathbf{A}$ will be symmetric and therefore Assumptions~\ref{ass:2} and~\ref{ass:3} will be satisfied.


\section{Numerical examples}\label{Numerical examples}

We now illustrate the advantages of the proposed methods for the two applications described in the previous section. Firstly, we consider an example including the analysis of the influence of damping parameters in a mechanical system. Secondly, we study the impact that variation of agents' dynamics has on a couple of different multi-agent systems. To this end, we employ our novel schemes to efficiently compute the $H_2$-norm of the underlying linear time-invariant system~\eqref{eq:MASsystemCompactForm}.

\begin{example}\label{ex1}
  We consider a mechanical system of $2d+1$ masses consisting of two main rows of $d$ masses connected with springs; see, e.g.,~\cite{morBenTT13,BeaGT20}. The springs in the first row of masses have stiffness $k_1$ and those in the second row have stiffness $k_2$. The first masses, on the left edge,  (i.e., masses $m_1$ and $m_{d+1}$) are connected to  a fixed boundary, while, on the other side of the rows, the last masses ($m_d$ and $m_{2d}$)  are connected to the mass $m_{2d+1}$ which, via a spring with stiffness $k_3$, is connected to a fixed boundary.

  \begin{figure}[t]
    \begin{center}
      \includegraphics[width=10cm]{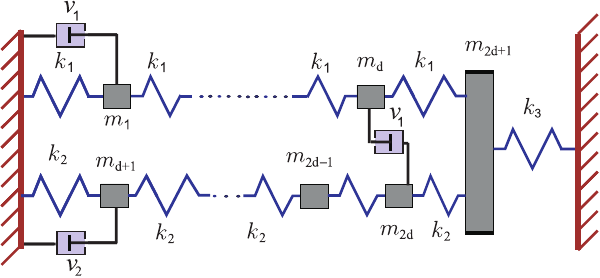}
    \end{center}%
    \caption{Example~\ref{ex1}, illustration of the mechanical system.}%
    \label{Fig2dp1Oscillator}
  \end{figure}

  The model is given by~\eqref{MDK} where the mass matrix is $M  =\diag{(m_1,m_2,\ldots,m_{2d+1})} $  and the  stiffness matrix $K$ is given by
  \begin{align*}
    K =\begin{bmatrix}
      K_{11} &   &  -\kappa_1\\[2mm]
      & K_{22} &  -\kappa_2\\[2mm]
      -\kappa_1^{\tran} & -\kappa_2^{\tran} &  k_1+k_2+k_3\\
    \end{bmatrix},\, \,
    K_{ii} =k_{i}
    \begin{bmatrix}
      2 & -1  & & &\\
      -1 & 2 &-1 & & \\
      &     \ddots & \ddots  & \ddots&\\
      &    & -1 & 2 & -1 \\
      & & &-1 & 2\\
    \end{bmatrix},
  \end{align*}
  with $\kappa_{i}=\begin{bmatrix}0&
    \ldots&
    0&
    k_{i}
  \end{bmatrix} $ for $i=1,2$.

  Each row has $d$ masses and we consider systems with $2d+1$ masses including

  \begin{align}
    \quad m_{i}&= \left\{ \begin{array}{ll} \frac{1}{10}(2d+1 -  2i) , \quad & i=1,\ldots,500, \\
                            \frac{1}{10}(i-500)+100    , \quad & i=501,\ldots,1000, \\
                            160     , \quad & i=1000,\ldots,2d, \label{mass-Config}
                          \end{array}\right.     \\
    m_{2d+1}&=175. \nonumber
  \end{align}
  The  stiffness values are chosen as
  $  k_1=40, k_2=20$, and $k_3=30$. We assume that the internal damping is modeled as a small multiple of the critical damping~\eqref{Ccrit} with $\alpha= 0.04$.  We would like to remind the reader that, for a given $d$, the coefficient matrix $A(\vvec)$ in~\eqref{Ljap jdba 1} has dimension $n=4d+2$.

  We consider viscosity optimization over three dampers ($k=3$) with viscosities $v_1, v_2$ and $ v_3$ with their positions encoded in
  \begin{equation}\label{damping-geometry}
    B_l=B_r  =\left [\,e_{i_1}, ~~ e_{i_1+\frac{d}{10}}-e_{i_1+\frac{d}{10}+d}, ~~ e_{i_2} \,\right],\quad 1\leq i_1 \leq d,\quad d+1\leq i_2 \leq 2d,\
  \end{equation}
  where $e_{i}$ is the $i$th canonical vector and  the indices $i_1$ and $i_2$
  determine the damping positions. The first damping positions will damp the first row of masses (using a grounded damper). Similarly, the third damps the second row of masses, while the second damper connects both rows of masses of the considered mechanical system.

  We will optimize the viscosity parameters with respect to the average total energy measure introduced in Section~\ref{subsec:application-damped_vibrational_systems}. The optimization problems were solved by using \textsc{Matlab}'s built-in \verb"fminsearch"  with  the starting point $\vvec^0=(100,100, 100)$.  The stopping tolerance for this routine was set to $  10^{-4}$.


  The performance of our new approaches will be illustrated on two damping configurations with different features.

  \begin{itemize}
  \item [a)] In the first case, we consider a small dimensional problem with $d=400$, so we have a system with 801 masses. Here we damp the 9 lowest undamped eigenfrequencies, i.e., $s=9$ in~\eqref{matrix Z}. The damping geometry is determined by~\eqref{damping-geometry} and 20 different damping positions will be considered. These are determined by  the following  indices:
    \begin{align*}
      i_1 \in \{50,   130,  210,   290\}, \quad\text{and}\quad
      i_2\in\{ 460,   540,   620,   700,   780\}.
    \end{align*}

    Due to the small problem dimension ($n=1\,602$), we employ the \REV{SMW+}recycling Krylov technique presented in Section~\ref{Small-scale setting: recycling Krylov methods} to solve this problem. In particular, we use the following parameters for the GCRO-DR method. The threshold on the relative residual norm is $10^{-10}$, the maximum number of iterations allowed is 300, and the number of eigenvectors $s_\ell$ used in the recycling technique is $s_\ell=10$ for all $\ell$. In the preconditioner~\eqref{eq:prec}, the rank $\bar p$ of $\mathcal{N}_1\mathcal{M}_1^{\tran}$ and $\mathcal{N}_2\mathcal{M}_2^{\tran}$ coming from the TSVD of $\mathbf{N}^{\tran}_1\mathbf{L}_0^{-1}\mathbf{M}_1$ and
    $\mathbf{N}^{\tran}_2\mathbf{L}^{-1}_0\mathbf{M}_2$, respectively, is 50.

  \item [b)] In the second case, we increase the problem dimension by considering $d=1000$, thus  here we have a system with 2001 masses and $n=4002$. In this case, we damp the 21  lowest undamped eigenfrequencies, i.e., $s=21$ in~\eqref{matrix Z}. Here we consider 25 damping positions determined by  the following  indices:
    \begin{align*}
      i_1 \in \{50,   250,   450,   650,   850\}, \quad\text{and}\quad
      i_2\in\{ 1150, 1350, 1550, 1750, 1950\}.
    \end{align*}

    We apply the projection framework illustrated in Section~\ref{Large-scale setting: a projection framework} for the solution of this larger problem.
    We would like to mention that a Lyapunov equation whose coefficient matrix is of dimension $4002$ as in this case is usually not considered to be large scale in the matrix equation community. On the other hand, the huge number of Lyapunov equations we need to solve within the optimization procedure makes our novel projection framework very appealing. In Algorithm~\ref{EKSM_opt} we employ $\epsilon= 10^{-8}$ and $m_{\max}=120$. The projected equations~\eqref{eq:param_projected} are solved by means of our \REV{SMW+}recycling Krylov approach, adopting the same parameters as in case a) above.

  \end{itemize}

  We first focus on case a) described above and in
  Figure~\ref{RelErrorViscCasea} we report the relative errors obtained by our \REV{SMW+}recycling Krylov method. The relative errors in the optimal viscosities were calculated by $\|\vvec^*-\vvec\|/\|\vvec\|$, where $\vvec$ and $\vvec^*$ denote the optimal viscosity vectors calculated by the \REV{SMW+}recycling Krylov method and the \textsc{Matlab}'s function \lyap{}, respectively. Similarly, the relative errors in the  average total energy are calculated by $|\trace ({X(\vvec)}^{*})-\trace ( X(\vvec))|/\trace ({X(\vvec)}^{*})$, where $\trace ({X(\vvec)}^{*})$ is once again the optimal trace for the given configuration obtained by the \textsc{Matlab}'s function \lyap{}, and $\trace (X(\vvec))$ is the optimal trace calculated by our \REV{SMW+}recycling Krylov scheme.

  \begin{figure}[t!]
    \begin{center}
      \includegraphics[width=\textwidth]{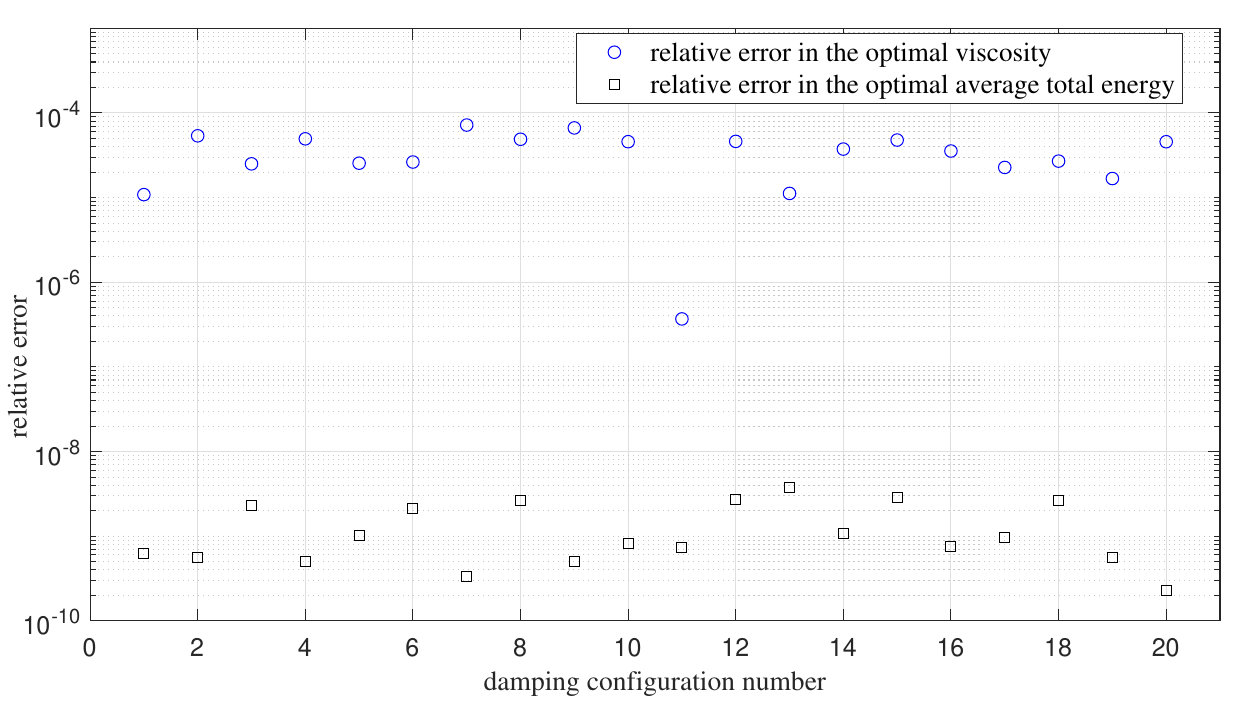}
    \end{center}%
    \caption{Example~\ref{ex1}, the case a). Relative errors in the total average energy (squares) and in the viscosity (circles) at optimal gains for the \REV{SMW+}recycling Krylov method.}%
    \label{RelErrorViscCasea}
  \end{figure}

  From the results reported in Figure~\ref{RelErrorViscCasea} we can notice that our novel \REV{SMW+}recycling Krylov approach leads to a solution process able to achieve very small errors in the average total energy. Also, the computed optimal viscosities turn out to be rather accurate, attaining an error of the same order of the threshold used for {\tt fminsearch}.
  Moreover, our novel \REV{SMW+}recycling Krylov strategy is very efficient. In particular, in this example, it accelerated the overall optimization process approximately 2.7 times (in the average case). Figure~\ref{RelErrorFunCasea} shows a precise acceleration ratio for each considered configuration. The displayed  time ratio is the ratio between the total time needed for the direct calculation of the average total energy by using \textsc{Matlab}'s function \lyap{} and the total time needed for the viscosity optimization by using the \REV{SMW+}recycling Krylov method.

  We would like to mention that, for this example, we tried to use also GCRO-DR with no preconditioning within our recycling Krylov technique. In the best case scenarios, plain GCRO-DR managed to converge by performing a larger number of iterations: $\mathcal{O}(10^2)$ iterations to be compared to the $\mathcal O(10^1)$ iterations performed in the preconditioned case. Such large number of iterations remarkably worsened the overall performance in terms of computational time. In some other cases, unpreconditioned GCRO-DR did not achieve the prescribed level of accuracy, thus jeopardizing the convergence of the outer minimization procedure. This shows that the preconditioning operator $\mathcal{P}$ described in~\eqref{eq:prec} works well, even though more performing preconditioners can certainly be designed.

  \begin{figure}[t!]
    \begin{center}
      \includegraphics[width=\textwidth]{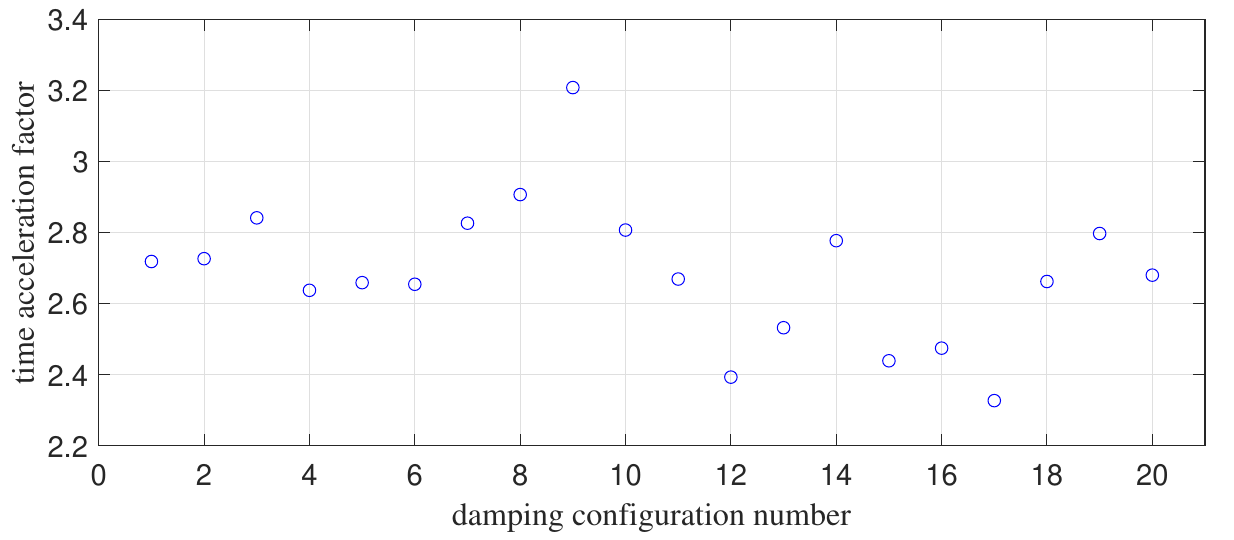}
    \end{center}%
    \caption{Example~\ref{ex1}, case a). Acceleration factors in the overall minimization procedure attained by employing our novel recycling Krylov approach.}%
    \label{RelErrorFunCasea}
  \end{figure}

  We now turn our attention to case b).
  As already mentioned, having coefficient matrices of dimension 4002 is often considered as working in the small-scale setting in the matrix equation literature so that dense linear algebra solvers may be preferred in this case. In this example, we would like to show that employing a projection framework is largely beneficial also in this scenario. To this end, we compare our fresh projection framework with the \REV{SMW+}recycling Krylov technique we propose in Section~\ref{Small-scale setting: recycling Krylov methods}. Indeed, we showed above that the latter method is able to achieve small errors while accelerating the overall solution process.
  Figure~\ref{RelErrorViscCaseb} shows the relative errors achieved by our
  projection framework with respect to the \REV{SMW+}recycling Krylov method.
  We can notice that the error in the optimal viscosities is still of the same order of magnitude of the threshold used within {\tt fminsearch} whereas the relative errors in the optimal average total energy norm are a little higher than the ones reported in
  Figure~\ref{RelErrorViscCasea}, even though still satisfactory. Moreover, the
  projection framework method accelerated the optimization process by approximately 23.3 times and Figure~\ref{RelErrorFunCaseb} showcases a precise acceleration time ratio for all the 25 configurations, compared to the \REV{SMW+}recycling Krylov method. The main reason why our novel projection method performs so well in terms of computational time lies in the fact that we basically use the same approximation subspace for every $\vvec_\ell$. In particular, once we construct a sufficiently large $\mathbf{EK}_m^\square(A_0,P)$ for the first equation, namely $\Delta_{m,\vvec_0}$ meets our accuracy demand, we keep using the same approximation subspace for all the subsequent equations, by expanding it very few times instead of computing a new subspace from scratch. For most of the damping configurations we considered, $\mathbf{EK}_m^\square(A_0,P)$ does not need to be expanded at all after its construction during the solution of the first equation.  Only for three configurations -- $(i_1,i_2)\in\{(250,1\,750),(450,1\,550),(450,1\,950)\}$ -- $\mathbf{EK}_m^\square(A_0,P)$ needed to be expanded a couple of times during the optimization procedure. In particular, for $(i_1,i_2)=(250,1\,750)$, the dimension of $\mathbf{EK}_m^\square(A_0,P)$ after the solution of the first equation was 708. This space has been expanded twice during the online optimization step, getting a space of dimension 720 and then 732. Similarly, for $(i_1,i_2)=(450,1\,950)$ we started the online phase with a space of dimension 780, which got expanded twice getting a space of dimension 792 first, and 804 later. For $(i_1,i_2)=(450,1\,550)$, $\mathbf{EK}_m^\square(A_0,P)$ was expanded only once, passing from a space of dimension 768 to a space of dimension 780.
  In general, for this example, the dimension of $\mathbf{EK}_m^\square(A_0,P)$ averaged over all the configurations is about $749$ with a minimum and maximum dimensions equal to $708$ and $852$, respectively.


  \begin{figure}[t!]
    \begin{center}
      \includegraphics[width=\textwidth]{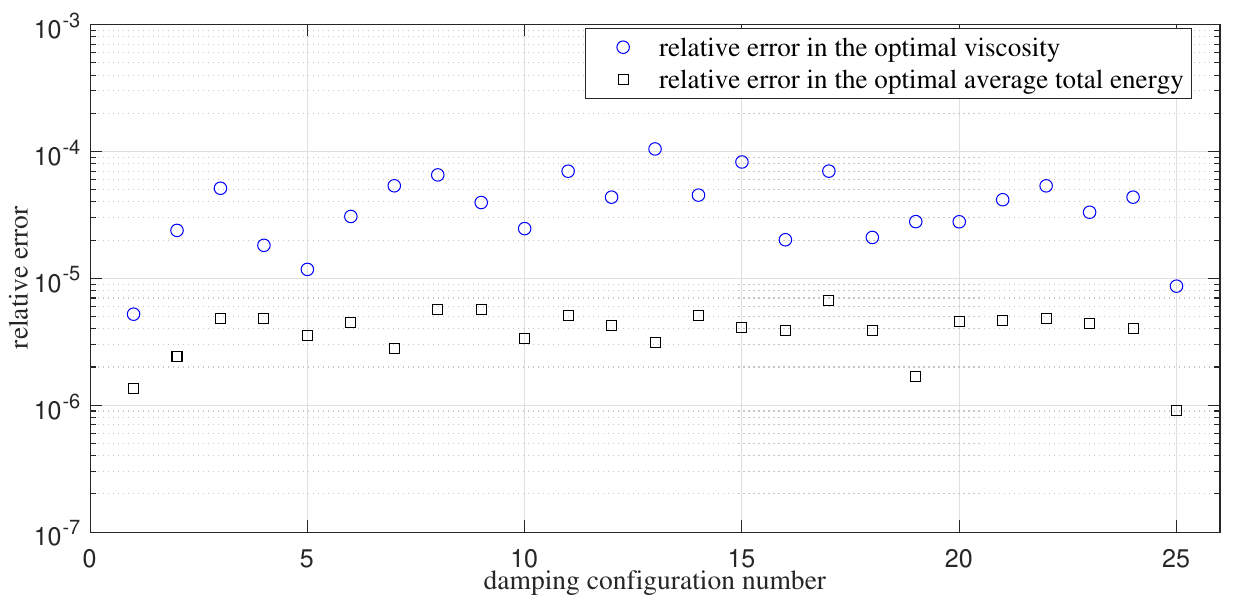}
    \end{center}%
    \caption{Example~\ref{ex1}, case b). Relative errors in the total average energy (squares) and in the viscosity (circles) at optimal gains for the projection framework.}%
    \label{RelErrorViscCaseb}
  \end{figure}

  \begin{figure}[t!]
    \begin{center}
      \includegraphics[width=\textwidth]{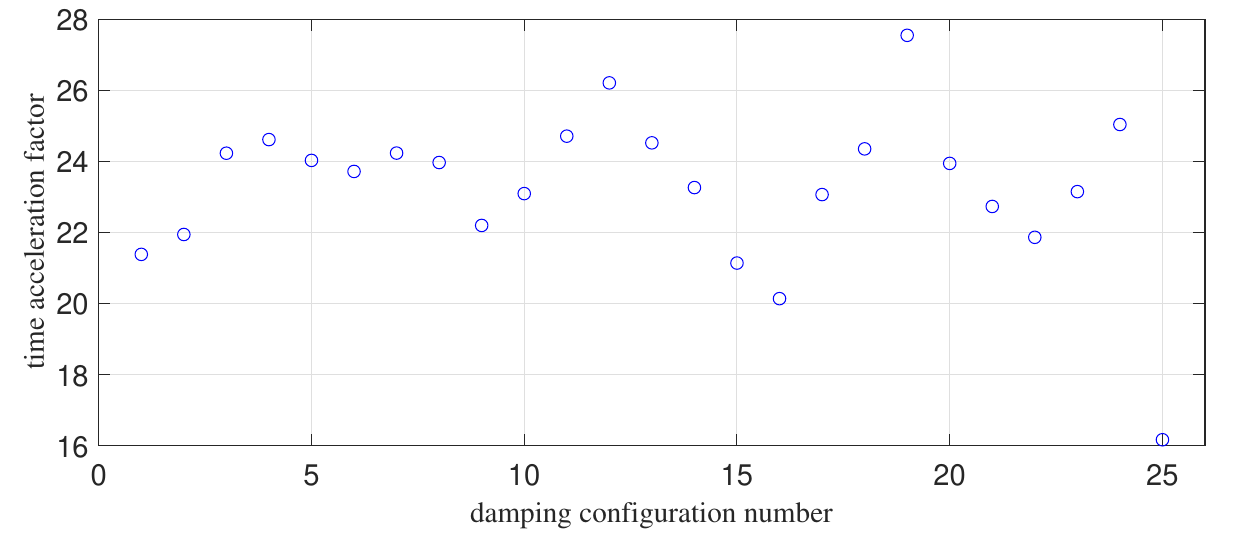}
    \end{center}%
    \caption{Example~\ref{ex1}, case b). Acceleration factors in the overall minimization procedure attained by employing our new projection framework.}%
    \label{RelErrorFunCaseb}
  \end{figure}

\end{example}

In the next numerical examples, we are going to illustrate the following two cases of analysis of multi-agent systems that are of interest:
\begin{itemize}
\item For each agent $i$ we choose $H=e_i$. Then we study the influence of the variation of the dynamics of the agent $i$ by calculating the $H_2$-norm of the corresponding systems. By applying this procedure to all agents,  we can study the structural importance of each agent in the system.
\item We choose $H=I_m$. For a given set of subsets of agents $\left\{J_1,\ldots,J_q  \right\}$, $J_i\subset \left\{ 1,\ldots,m \right\}$, and a given set of parameters of agents in $J_i$, we study the influence of these parameters on the dynamics of the overall system. By calculating the corresponding $H_2$-norms, we can study the influence of these parameters on the dynamics of the whole system when all agents are exposed to independent external disturbances.
\end{itemize}
See Examples~\ref{ex2} and~\ref{ex3} for further details.

\begin{example}\label{ex2}
  For the sake of simplicity, here
  we investigate agent dynamics that are defined by $2\times 2$ state matrices.
  We consider $m=196$ agents.
  The state matrices of all agents are given by
  \[
    A^{(i)}=\begin{bmatrix}
      -10&5   \\5&-8
    \end{bmatrix},
    \quad \text{for}\quad i=1,\ldots, m.
  \]
  By following the notation of Section~\ref{subsec:multi-agent systems}, the other components of the system are
  \[ K^{(i)}=\begin{bmatrix}
      0.3 &0 \\ 0&0.2
    \end{bmatrix}, \quad B^{(i)}=(C^{(i)})^{\tran}=\begin{bmatrix}
      1 &1 \\ -1&1
    \end{bmatrix},\quad \mbox{for} \quad i=1,\ldots, m.
  \]
  Therefore, the $2\times 2$ blocks of the system matrix $\mathbf{A}\in \mathbb{R}^{2m\times 2m}$ are given by
  \begin{align*}
    \mathbf{A}_{ij} & =\delta_{i,j}A^{(i)} -\ell_{ij}B^{(i)}K^{(i)}C^{(j)}, \quad i,j\in \left\{ 1,\ldots,m \right\}.
  \end{align*}
  An illustration of the underlying topology can be seen in Figure~\ref{Ex1Topology}.

  We would like to analyze the influence of parameter variations in different agents. This means that for a fixed $k$-th agent to be altered, we consider a low-rank update of $\mathbf{A}$ aimed at modifying only its $k$-th diagonal $2\times 2$ block. In particular, given the parameters $\vvec=(v_{1}, v_{2}, v_{3})$,  the matrix $\mathbf{A}^{(k)}(\vvec)$ corresponds to a low-rank update of $\mathbf{A}$ of the following form

  \begin{equation}\label{AkOneAgentCase}
    \mathbf{A}^{(k)}(\vvec)=\mathbf{A}-B_l
    \mathop{\mathrm{diag}}( v_1 ,v_2, v_2 , v_3 )B_r^{\tran},
  \end{equation}
  where $B_l,B_r \in \RR^{2m\times 4 }$ are determined by the agent index $k$. It holds,
  \[
    B_l(2(k-1)+1:2k,1:4) =\begin{bmatrix}
      1 & 1 & 0 & 0 \\
      0 & 0 & 1 & 1 \\
    \end{bmatrix},\quad
    B_r(2(k-1)+1:2k,1:4)
    =\begin{bmatrix}
      1 & 0 & 1 & 0 \\
      0 & 1 & 0 & 1\\
    \end{bmatrix},
  \]
  whereas all other entries of $B_l$ and $B_r$ are zero. In this example, we considered  the influence of the variation of the dynamics of one agent by calculating the $H_2$-norm of a corresponding system when only this agent is externally disturbed. Thus, in the case of the matrix $\mathbf{A}^{(k)}$ (that corresponds to analyzing the $k$-th agent), the corresponding matrix $\mathbf{E}$ is given by $\mathbf{E}= e_k\otimes I_n$, for $k=1,\ldots,m$.

  For all different agents, we will analyze the dependence of the system to the parameter variation $v_1$, $v_2 $, and $v_3$ of the $k$-th agent, $k=1,\ldots,m$. In particular, the low-rank update of the system matrix given by~\eqref{AkOneAgentCase} influences the $k$-th block in the following way. The diagonal elements are altered by the parameters $v_1$ and $v_3$, while the off-diagonal elements are symmetrically modified by the parameter $v_2$. All the parameters $v_1$, $v_2 $, and $v_3$ will be varied between -9.9 and 10.1 with step $1$.

  In this example we will modify all the agents, namely we consider all agent's indices $k=1,\ldots,m$.

  We note that some instances of $\mathbf{A}^{(k)}(\vvec)$ turned out to be non-stable. \REV{In particular, we have computed the rightmost eigenvalue $\lambda$ of $\mathbf{A}^{(k)}(\vvec)$ and discarded those configurations for which $\text{Re}(\lambda)\geq 0$.} By doing so, for this particular example, the total number of considered parameters is equal to 1\,400\,328\footnote{\REV{This number is obtained by multiplying all the possible values of $v_1$, $v_2$, and $v_3$, namely $21^3$, by all the adopted selections of $k$, i.e., $m$, and then subtracting the number of unstable cases ($414\,828$).}}. This means that we needed to calculate the $H_2$-norm, and thus solve a Lyapunov equation, a significant number of times. In particular, for fixed $v_1$, $v_2$, $v_3$, we are interested in identifying for which $k$, $k=1,\ldots, m $, the $H_2$-norm of the underlying system over described parameter variations is maximal. This provides an insight on the importance of the considered agent.

  Table~\ref{Table196agents} presents the average computational time needed by different methods to perform the task described above. 
  We report the average required time by our novel solvers, namely the \REV{SMW+}recycling Krylov method and the projection framework (denoted in the table by \REV{SMW+}rec. Krylov and proj.\ framework, respectively). The computational parameters of our \REV{SMW+}recycling Krylov method are as in Example~\ref{ex1}. The only exceptions are in the GCRO-DR threshold and the rank $\bar p$ of the TSVD $\mathcal{N}_1\mathcal{M}_1^{\tran}$ and $\mathcal{N}_2\mathcal{M}_2^{\tran}$ used in the preconditioner. Here, we use $10^{-8}$ and $\bar p=5$, respectively. For Algorithm~\ref{EKSM_opt} we used $\epsilon=10^{-10}$ and $m_{\max}=200$. The projected equations are solved by our \REV{SMW+}recycling Krylov method with the same setting we have just described above.


  In the same table, we also document the relative errors achieved by the different algorithms. To this end, we considered the results obtained by using \lyap{} as \emph{exact}. As we can see, all the approaches achieve a very small relative error. On the other hand, the projection framework outperforms all other approaches in terms of computational time; being one order of magnitude faster than all the other algorithms we tested. Also in the multi-agent system framework, reusing the same subspace $\mathbf{EK}_m^\square(A_0,P)$ is key for our projection method to be successful. For this example, the extended Krylov subspace generated by our routine is very small for all the configurations we tested. In particular, the space dimensions range from $8$ to $56$ with an average value equals to $29.08$.

  \begin{table}[t!]
    \begin{center}
      \begin{center}
        \begin{tabular}{|c||c|c|}
          \hline
          & average required time (s) & average relative error \\ \hline \hline \xrowht[()]{7pt}
          \lyap{}          &   625.02   &  -  \\\hline \xrowht[()]{7pt}
          \REV{SMW+}rec. Krylov      &   678.05   & $3.575\cdot 10^{-12}$  \\\hline \xrowht[()]{7pt}
          proj.\ framework &  17.48    & $3.675\cdot 10^{-12}$    \\
          \hline
        \end{tabular}
      \end{center}
      \caption{Example~\ref{ex2}. Computational time and relative errors.}%
      \label{Table196agents}
    \end{center}
  \end{table}

  \begin{figure}[t!]
    \begin{center}
      \includegraphics[width=.8\textwidth]{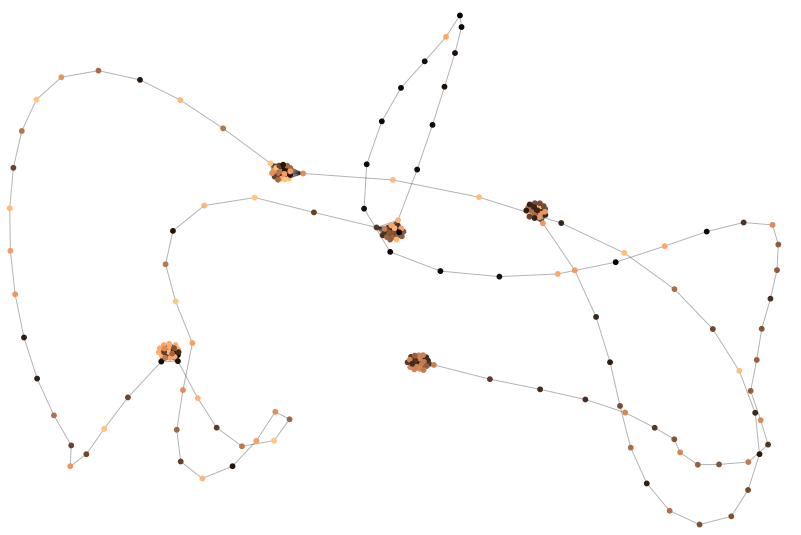}
    \end{center}%
    \caption{Example~\ref{ex2}. Maximal $H_2$-norm for different agents with respect to the parameter variations we performed. Darker colors correspond to the agents where the maximal $H_2$-norm is larger.}%
    \label{Ex1Topology}
  \end{figure}

  Figure~\ref{Ex1Topology} illustrates the topology of the problem and provides information regarding the maximal $H_2$-norm we computed by varying the parameter of the system. The node colors illustrate this latter aspect. In particular, darker colors correspond to the agents where the maximal $H_2$-norm turns out to be larger. It can be seen that certain groups of agents result in much larger $H_2$-norms. Therefore, they are much more important for the considered multi-agent system, as altering those agents may lead to an (almost) unstable system. Thanks to our novel solvers, this kind of analysis can now be accurately carried out at acceptable cost.
\end{example}

\begin{example}\label{ex3}
  In this example, we still consider agent dynamics defined by $2\times 2$ state matrices of dimension $m$. We aim at analyzing the parameter variation in off-diagonal elements of consecutive agents. The system matrices $A^{(i)}$, $B^{(i)}$, $C^{(i)}$, $K^{(i)}$,  and $\mathbf{A}$ are defined as in Example~\ref{ex2} but here we consider $m =200$.

  The Laplacian matrix $L$ of the underlying graph is given by Algorithm~\ref{TopologyL} and the corresponding matrices $B_l$ and $B_r$ determining the low-rank perturbation are defined in the following way. For a given odd index $1\leq k\leq 2m-3$, $B_l,B_r \in \RR^{2m\times 4 }$ are
  \[
    B_l(k:k+3,1:4) =
    \begin{bmatrix}
      0 & 1 & 0 & 0 \\
      1 & 0 & 0 & 0 \\
      0 & 0 & 0 & 1 \\
      0 & 0 & 1 & 0
    \end{bmatrix},\quad
    B_r(k:k+3,1:4)=I_4,
  \]

  and all other entries of $B_l$ and $B_r$ are zero.

  \begin{algorithm}[t]
    \DontPrintSemicolon
    \SetKwInOut{Input}{input}\SetKwInOut{Output}{output}
    \Input{number of agents $m$}
    \Output{the Laplacian matrix $L$ }%
    $r= [m / 20 + m / 50:m / 10,\, m / 4 + m / 20:m / 4 + m / 10,\, m / 2 + m / 20 + 2:m / 2 + m / 20 + m / 50,\ldots$
    $\phantom{r= [} m / 2 + m / 10 + 2:m / 2 + m / 10 + m / 20,\, m - m / 10:m - m / 20]\in \mathbb{R}^{i_r}$\;
    $h=[1:m / 20,\, m / 10 + m / 40:m / 10 + m / 20,\, m / 2:m / 2 + m / 20,\, m - 1]\in \mathbb{R}^{i_h}$\;
    \For {$i=1: m-2$}{
      $L(i,i+1)=-1  $;\, $ L(i,m)=-1 $;
    }

    $L(1,m-1)=-1$;\, $L(m-1,m)=-1$;\;

    \For {$i=1:i_r$}{
      \If {$r(i)==(m-1)$}{
        $L(1,m-1)=0$;
      }
      \Else{
        $L(r(i),r(i)+1)=0$;
      }
    }
    $L(h,m)=0$; $L=L+L^{\tran}$;\;
    \For {$i=1:m$}{
      $L(i,i) = -\sum_{j=1}^{m}(L(i,j))$;
    }
    \caption{Construction of the Laplacian matrix $L$ in Example~\ref{ex3}.}%
    \label{TopologyL}
  \end{algorithm}

  The modified system matrix will depend on two parameters $v_1$ and $v_2$ and it is defined as
  \begin{equation*}
    \mathbf{A}^{(k)}(\vvec)=\mathbf{A}-B_l
    \mathop{\mathrm{diag}}( v_1 ,v_1, v_2 , v_2 )B_r^{\tran}.
  \end{equation*}
  This means that for a fixed index $k$, the off-diagonal elements of the diagonal blocks determined by $k$ and $k+1$ are modified by $v_1$ and $v_2$, respectively. The parameters $v_1$ and $v_2$ vary between -4.9 and 14.6 with step $0.5$. As before, we only consider the stable instances of the matrices $\mathbf{A}^{(k)}$ for our purposes.

  We will study the behavior of the $H_2$-norm of the underlying system by varying the parameter $k\in \{61 ,  141 ,  221  , 301\} $. The total number of parameter configurations $\vvec=(v_1,v_2)$ for which $\mathbf{A}^k(\vvec)$ is stable is equal to $6\,044$\footnote{\REV{As before, this number is obtained by multiplying all the possible values of $v_1$ and $v_2$, namely $40^2$, by all the adopted selections of $k$, i.e., $4$, and then subtracting the number of unstable cases (356).}}.
  Moreover, in this example, we considered the matrix $\mathbf{E}=I_{2m}$ for all cases, meaning that we measured the $H_2$-norm of the corresponding systems when all agents were independently externally disturbed.

  We test the same routines as in Example~\ref{ex2}, namely \lyap{}, 
  and our novel schemes, with the same setting as before.

  We start by reporting the relative errors and the computational timings attained in the computation of the $H_2$-norms for all the considered parameters; see Table~\ref{AktwoAgentCase}. The relative errors are calculated with respect to $H_2$-norm obtained by using \lyap{}.

  \begin{table}[t!] \begin{center}
      \begin{center}\footnotesize
        \begin{tabular}{|r||cccc|cccc|}
          \hline
          \multirow{2}{*}{} &
                              \multicolumn{4}{c|}{total required time(s)} & 
                                                                            \multicolumn{4}{c|}{average relative error}   \\
          $k$  & 41 &  121 &  201  & 281 & 41 &  121 &  201  & 281 \\
          \hline \hline \xrowht[()]{7pt}
          \lyap{}  & 165.51  & 141.36  & 157.43 & 164.64 & -&-&-&-\\ \hline \xrowht[()]{7pt}
          \REV{SMW+}rec. Krylov &    257.49 & 242.93 & 255.52 & 265.59 &  7 $\cdot 10^{-14}$&    1.03  $\cdot 10^{-13}$& 2.07 $\cdot 10^{-13}$&      1.7  $\cdot 10^{-13}$\\  \hline \xrowht[()]{7pt}
          proj.\ framework & 15.23  &  9.55  & 10.71 &  25.31 & 9.79 $\cdot 10^{-14}$&  7.64 $\cdot 10^{-14}$&    2.18   $\cdot 10^{-13}$&  1.28 $\cdot 10^{-11}$\\
          \hline
        \end{tabular}
      \end{center}
      \caption{Example~\ref{ex3}. Computational time and relative errors.}%
      \label{AktwoAgentCase}
    \end{center}
  \end{table}

  From the results in Table~\ref{AktwoAgentCase} we can see that our \REV{SMW+}recycling Krylov scheme always attains very small relative errors. On the other hand, it turns out to be not very competitive in terms of computational timings. A finer tuning of the GCRO-DR parameters and the preconditioning operator may lead to some improvements in the performance of our scheme. Our projection framework outperforms all other approaches in terms of computational timing.

  The extended Krylov subspaces constructed by our projection method turn out to be a little larger in this example than the ones in Example~\ref{ex2}. In particular, the smallest and largest subspaces have dimension $12$ and $96$, respectively, whereas the average space dimension is $49.71$.

  \begin{figure}[t!]
    \begin{center}
      \includegraphics[width=\textwidth]{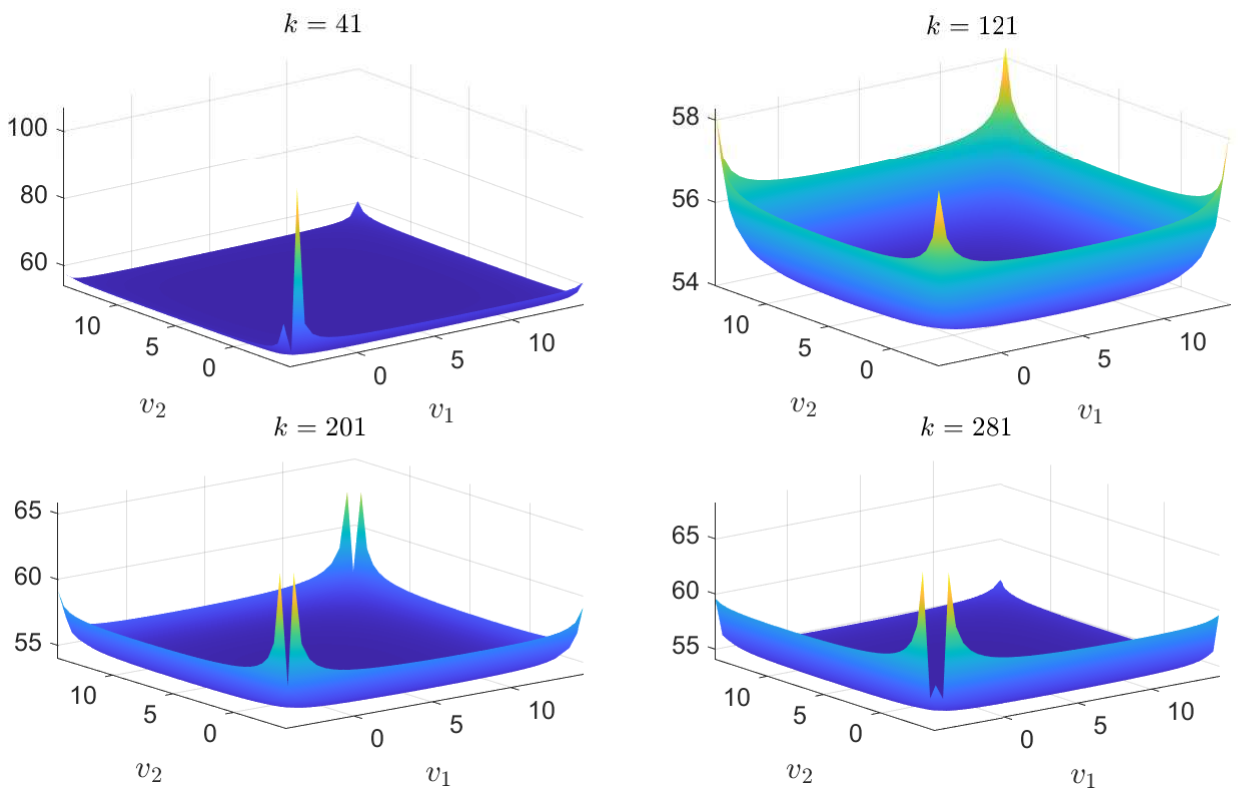}
    \end{center}%
    \caption{Example~\ref{ex3}: magnitude of the $H_2$-norm.}%
    \label{Ex2Surface}
  \end{figure}

  In Figure~\ref{Ex2Surface} we display the magnitude of the $H_2$-norm by  varying $\vvec$ for a fixed $k$. It can be seen how certain parameter values have a much greater impact on the $H_2$-norm of the system, thus emphasizing the important role of the corresponding pairs of agents. Once again, thanks to our novel solution processes, such analysis can be carried out in very few seconds on a standard laptop.

\end{example}

\section{Conclusions}\label{Conclusions}

We proposed two different, efficient, and accurate methods for solving sequences of parametrized Lyapunov equations. The \REV{SMW+}recycling Krylov approach is well-suited for small dimensional problems and is able to provide solutions achieving a very small relative errors. The proposed projection framework relies on the extended Krylov subspace method, and it makes use of the aforementioned \REV{SMW+}recycling Krylov technique to solve the projected equations.
Even though this second algorithm is tailored to large-scale problems, our numerical results show that it is extremely competitive also for medium-sized problems, especially if the number of Lyapunov equations to be solved is very large as it happens in the application settings we studied. We showed that the projection framework
is able to speed up the entire solution process by an order of magnitude.
Expensive analyses like viscosity optimization for vibrational systems and output synchronization of multi-agent systems are, thus, now affordable.

\section*{Acknowledgements}
The first author is a member of the INdAM Research Group GNCS\@.
His work was partially supported by
the funded project GNCS2023 ``Metodi avanzati per la risoluzione di PDEs su griglie strutturate, e non''  (CUP\_E53C22001930001).

The work of the second author was  supported in parts by Croatian Science Foundation under the project  `Vibration Reduction in Mechanical Systems' (IP-2019-04-6774).

\section*{Conflict of interest}
 The authors declare no competing interests.

\bibliographystyle{siamplain}
\bibliography{bibliogr}
\end{document}